\documentclass{article}

\usepackage{arxiv}

\usepackage[utf8]{inputenc} 
\usepackage[T1]{fontenc}    
\usepackage{hyperref}       
\usepackage{url}            
\usepackage{booktabs}       
\usepackage{amsfonts}       
\usepackage{nicefrac}       
\usepackage{microtype}      
\usepackage{lipsum}
\usepackage{amsmath}

\usepackage{amsfonts,amssymb,stmaryrd}
\usepackage{graphicx}
\usepackage{amsthm}
\usepackage{xcolor}
\usepackage{soul}
\usepackage{paralist} 

\newcommand{\R}{\mathbb{R}}

\newtheorem{theorem}{Theorem}[section]
\newtheorem{proposition}{Proposition}[section]
\newtheorem{lemma}{Lemma}[section]

\newtheorem{remark}{Remark}[section]

\usepackage{subcaption}
\usepackage{cite}
\usepackage{authblk}
\newcommand{\p}{\partial}
\newcommand{\bb}{\begin{equation}}
\newcommand{\ee}{\end{equation}}
\newcommand{\ba}{\begin{array}}
\newcommand{\ea}{\end{array}}
\newcommand{\f}{\frac}
\usepackage[all]{xy}
\newcommand{\ds}{\displaystyle}

\newcommand{\al}{\alpha}

\newcommand{\s}{\mathbb{S}}

\newcommand{\sign}{\text{sgn}\,}

\newcommand{\N}{{\mathbb N}}

\usepackage{ stmaryrd }

\numberwithin{equation}{section}

\usepackage{ulem}

\usepackage[most]{tcolorbox}

\definecolor{TextColor}{rgb}{0.75, 0.75, 0.75}

\title{Global and blow-up solutions for a non-local integrable equation with applications to geometry}

\author[1]{Nilay Duruk Mutlubas}  \author[2]{Igor Leite Freire}
\affil[1]{Faculty of Engineering and Natural Sciences, Sabanci University, Turkey
\texttt{nilay.duruk@sabanciuniv.edu}}

\affil[2]{ Universidade Federal de São Carlos,
São Carlos-SP, Brasil\\
  \texttt{igor.leite.freire@gmail.br}}
 
\begin{document}
\maketitle
\begin{abstract}
We establish the global existence of higher-order Sobolev solutions for a non-local integrable evolution equation arising in the study of pseudospherical surfaces and non-linear wave propagation. Under a natural assumption on the initial momentum, we prove that the solution remains globally regular in arbitrary finite-order Sobolev spaces. The proof relies on an inductive energy method involving a hierarchy of functional estimates and applies to both the periodic and non-periodic settings. We determine a criterion for the existence of blow-up solutions. The consequences of these qualitative properties of the solutions on Riemannian surfaces determined by the solutions of the equation are investigated.
\end{abstract}

\keywords{Global existence of solutions \and Blow-up of solutions \and Riemannian metrics} 

{\bf MSC classification 2020:} 35B45, 37K10, 53B20.

\section{Introduction}

The study of integrable equations has been an active subject of research since the seminal work by Zabusky and Kruskal \cite{zab}, when the soliton solutions of the KdV equation were first reported.

Since then a large number of researchers have been driven to these equations. This is readily understood, since this sort of equation has rich structural properties that can be investigated from many different standpoints, ranging from algebraic geometry to analysis of PDEs. The reader is guided to the book by Kasman \cite{kasman}, where some of these aspects are discussed.

Around 30 years ago, Camassa and Holm \cite{chprl} rediscovered an integrable equation having peaked solutions, that is, solutions that, far from a certain line in space-time, behave like a smooth decaying travelling waves away from the crest, but their derivatives are discontinuous along the peak. These solutions are known as {\it peakons}. 

The Camassa-Holm (CH) equation has been proved to be a seminal model and it attracted, and has still been attracting, considerable interest. This explains why many studies have focused on Camassa-Holm type equations.

Some years ago, Novikov carried out an extensive classification of CH type equations \cite{nov} of the form
$$
u_t-u_{txx}=F(u,u_x,u_{xx},u_{xxx}).
$$

By virtue of the presence of the Helmholtz operator $\Lambda^{2}=1-\p_x^2$, these equations can be rewritten in a non-local, evolution form. In this paper we focus on qualitative properties of the following equation:\bb\label{1.1}
u_t-u_{txx}=\p_x(2-\p_x)(1+\p_x)u^2.
\ee

Equations of this type arise in modelling dispersive shallow water waves and integrable systems \cite{chprl,nov}, where non-local effects and non-linear interactions are fundamental in the description of wave behaviour. 

In regard to equation \eqref{1.1}, it has been studied in recent years; see, for instance, \cite{li-na, li-jmaa, liu-jde, pri-na,nilay, guo} where qualitative properties of \eqref{1.1} are studied. More recently, it was shown in \cite{tito,freire-tito-sam,nazime} that \eqref{1.1} is geometrically relevant in the sense that its solutions determine metrics for surfaces of Gaussian curvature ${\cal K}=-1$.

In \cite{nilay} we considered how periodic solutions affect the geometry of the corresponding surface. To this end, in \cite{nilay} we studied \eqref{1.1} simultaneously from the point of view of Cauchy problems, leading to an analysis of PDEs problems and qualitative properties of the metrics determined by these solutions.

Equation \eqref{1.1} can be transformed into the following non-local evolution equation
\bb\label{1.2}
u_t-2uu_x=\p_x\Lambda^{-2}(u^2+(u^2)_x),
\ee
which is more convenient from the point of view of qualitative analysis.

Henceforth $\mathbb{K}$ denotes either $\R$, when non-periodic problems come into play, or $\s=[0,1)$, for the periodic case. In \eqref{1.2}, $\Lambda^{-2}f:=g\ast f$, $\p_x\Lambda^{-2}f=(\p_xg)\ast f$, $\ast$ denotes the usual convolution, and
\bb\label{1.3}
\ba{lll}
    \ds{g(x)=\f{1}{2}e^{-|x|}}, &\ds{\p_x g(x)=-\f{\sign{(x)}}{2}e^{-|x|}},&\,\,\text{when }\mathbb{K}=\R,\\
    \\
    \ds{g(x)=\f{\cosh{(x-\lfloor x \rfloor-1/2)}}{2\sinh{(1/2)}}}, &\ds{\p_x g(x)=-\f{\sinh{(x-\lfloor x \rfloor-1/2)}}{2\sinh{(1/2)}}},&\,\,\text{when }\mathbb{K}=\s,
\ea
\ee
where $\lfloor \cdot \rfloor$ denotes the greatest integer function.

\begin{remark}\label{rem1.1}
    In this paper $H^s(\mathbb{K})$ denotes the usual Sobolev space of order $s\in\R$. The norms of a function $u$ belonging to a Banach space $Y$ will be denoted by $\|u\|_Y$. Usually the functions we will deal with have time dependence. That being, the norm of a function $u(t,\cdot)\in Y$ will be denoted by $\|u(t,\cdot)\|_Y$.
\end{remark}

Strictly speaking, \eqref{1.2} and \eqref{1.1} are not equivalent equations. A discussion for this fact for the CH equation can be found in \cite{freire-ch}. Mutatis mutandis, the same discussion is applied to \eqref{1.1}-\eqref{1.2}. Depending on the function space in which the solutions lie in, they can however be considered as equivalent. In particular, this is true for Sobolev spaces $H^s(\mathbb{K})$, for suitable $s$. 

From the point of view of analysis, a strong solution to \eqref{1.2} is a $C^1$ function which solves the equation, whereas a strong solution to \eqref{1.1} must be $C^3$ with respect to $x$ and $C^1$ considering $t$. Due to geometric reasons, the solutions we had to consider in \cite{nilay} were solutions for both equations. In particular, our objects were members of the class 
$
C^0([0,T),H^4(\mathbb{K}))\cap C^1([0,T);H^3(\mathbb{K}))
$
where the lifespan $T$ is a value determined by the initial datum. If one compares this function space with those typically considered in the analysis of PDEs, such as \cite{li-na, li-jmaa, liu-jde}, we see that the results proved in \cite{nilay} were concerned with more regular functions than those typically studied in analysis. The reason for that is: a solution has to be at least $C^3$ (in $x$) in order for it to define a metric for a surface. A similar problem appears when geometry and analysis are conjunctively studied for this sort of equations, see also \cite{pri-jde, freire-ch, nilay, freire-dp}.

In \cite{freire-AML}, solutions of higher regularity for the CH and the Degasperis-Procesi equations were considered. Due to this work, as well as our prior results dealing with geometric analysis of \eqref{1.1}, in this paper we consider the problem of finding solutions of \eqref{1.1} when the initial momentum $m_0:=u_0-u''_0\in H^n(\mathbb{K})$, where $n\in\N:=\{1,2,3,\cdots\}$.

Our main result concerning the existence of global solutions of \eqref{1.1} is:

\begin{theorem}\label{teo1.1}
Let $n\in\N$ and $m_0:=u_0-u_0''\in H^n(\mathbb{K})\cap L^1(\mathbb{K})$. If $m_0(x)\geq0$, for any $x\in \mathbb{K}$, then the corresponding solution $u$ of 
\bb\label{1.4}
\left\{
\ba{lcl}
u_t-u_{txx}=\p_x(2-\p_x)(1+\p_x)u^2,\\
\\
u(0,x)=u_0(x),
\ea
\right.
\ee
$x\in \mathbb{R}$, exists globally in time, that is, $u\in C^{0}([0,\infty),H^{n+2}(\mathbb{K}))\cap C^{1}([0,\infty),H^{n+1}(\mathbb. {K}))$. Whenever $\mathbb{K}=\s$ we have the additional periodic condition $u(t,x)=u(t,x+1)$.
\end{theorem}

\begin{remark}\label{rem1.2}
    We do not address the problem of local existence of solutions of \eqref{1.4} in Sobolev spaces $H^s(\mathbb{K})$ in the present work since it is a topic already previously investigated. For the periodic case the reader can find it in \cite[Theorem 1.2]{nilay} while \cite[Theorem 3.2]{li-na} tackle the non-periodic case. Therefore, from now on we shall assume without further mention that as long as $u_0\in H^s(\mathbb{K})$, $s>3/2$, then \eqref{1.4} has a unique local solution $u\in C^0([0,T), H^s(\mathbb{K}))\cap C^1([0,T), H^{s-1}(\mathbb{K}))$, for some $T$ depending on $u_0$.
\end{remark}

Once the problem of global existence is addressed, a natural question is to clarify whether blow-up solutions may arise. Regarding the periodic problem, to the best of our knowledge, no work has answered this point, whereas for the non-periodic case we have some blow-up results reported in \cite[Theorem 5.1]{li-na}, \cite[Proposition 2]{guo} and \cite[Theorem 4.2]{li-jmaa}.

We now present a blow-up result that holds for both periodic and non-periodic case. For the latter, its conditions on the initial datum is different of those used in the aforementioned references.

\begin{theorem}\label{teo1.2}
    Assume that $u_0\in H^4(\mathbb{K})$, $(u_0-u_0')(x)<0$, for all $x\in \mathbb{K}$, and $u_0-u_0'\in L^1(\mathbb{K})$; $\sigma_0\in\Big(\|G\|_{L^\infty(\mathbb{K})}\|u_0-u_0'\|_{L^1(\mathbb{K})},\|G\|_{L^\infty(\mathbb{K})}\|u_0-u_0'\|_{L^1(\mathbb{K})}+\f{1}{2}\Big)$. If there exists a point $x_0\in\mathbb{K}$ such that $(u_0-u_0')(x_0)=-\sigma_0$, then the corresponding solution to \eqref{1.1} blows-up at
    $$T_0=-\f{1}{\|G\|_{L^\infty(\mathbb{K})}\|u_0-u_0'\|_{L^1(\mathbb{K})}}\ln{\Big(1-\f{\|G\|_{L^\infty(\mathbb{K})}\|u_0-u_0'\|_{L^1(\mathbb{K})}}{\sigma_0}\Big)}.$$
\end{theorem}

Above $G$ denotes the Green function of the operator $(1-\p_x)$, which is given by
\bb\label{1.5}
G(x)=\left\{\ba{ll}
    \ds{e^{-x}H(x)}, &\,\,\text{when }\mathbb{K}=\R,\\
    \\
    \ds{\f{e^{x-\lfloor x \rfloor}}{e-1}},&\,\,\text{when }\mathbb{K}=\s,
\ea
\right.
\ee
where $H(\cdot)$ denotes the Heaviside step function.

Theorems \ref{teo1.1} and \ref{teo1.2} are key ingredients for proving the next geometric results.

\begin{theorem}\label{teo1.3} 
Let $m_0=u_0-u_0''\in H^2(\mathbb{K})\cap L^1(\mathbb{K})$ be an initial datum with $u_0-u_0''>0$, $u$ be the corresponding solution to \eqref{1.4} and
\bb\label{1.6}
\ba{lcl}
\omega_1&=&\Big(u-u_{xx}\Big)dx+\Big(2u(u-u_{xx})-2(u-u_x)^2\Big)dt,\\
\\
\omega_2&=&\Big(\mu (u-u_{xx})\pm \sqrt{1+\mu^{2}}\Big)dx+\ds{\mu\big(2u(u-u_{xx})-2(u-u_x)^2\big)}dt,\\
\\
\omega_3&=&\Big(\pm\sqrt{1+\mu^{2}}(u-u_{xx})+\mu \Big)dx\pm\Big(\ds{\sqrt{1+\mu^{2}}\big(2u(u-u_{xx})-2(u-u_x)^2\big)}\Big)dt.
\ea
\ee

Then the one-forms \eqref{1.6} are defined on $U:=\R\times(0,\infty)$ and for any open, connected set of $U$ such that $\nabla u\neq(0,0)$ is ensured a pseudospherical surface (PSS) structure.
\end{theorem}

In this work we do not pay detailed attention to PSS's and PDE's. We guide the reader to \cite{chern, nilay,reyesjmp2000, reyes2000, reyes2002, reyes2006-sel, reyes2006-jde, reyes2011,pri-jde,freire-tito-sam,freire-ch,freire-dp,nazime,tito} for further details. In \cite{nilay,pri-jde,freire-ch,freire-dp,nazime} there are deeper discussions concerning PSS's determined by the solutions of Cauchy problems.

The one-forms \eqref{1.6} satisfy 
$$
\ba{lcl}
d\omega_1-\omega_3\wedge\omega_2&=&{\cal E}dx\wedge dt,\quad
d\omega_2-\omega_1\wedge\omega_3={\cal E}dx\wedge dt,\\
\\
d\omega_3-\omega_1\wedge\omega_2&=&\pm\sqrt{1+\mu^2}{\cal E}dx\wedge dt,
\ea
$$
where ${\cal E}=u_{t}-u_{txx}-4uu_{x}-2u_{x}^{2}-2uu_{xx}+6u_{x}u_{xx}+2uu_{xxx}$, see \cite[Equation (2.16)]{nilay}, meaning that
\bb\label{1.7}
\left\{
\ba{lcl}
d\omega_1&=&\omega_3\wedge\omega_2,\\
\\
d\omega_2&=&\omega_1\wedge\omega_3,\\
\\
d\omega_3&=&\omega_1\wedge\omega_2
\ea
\right.
\ee
if and only if $u$ in \eqref{1.6} is a solution of \eqref{1.1}.

For those solutions $u$ for which $\omega_1\wedge\omega_2\neq0$, then $u$ defines a pseudospherical surface (that is, a surface of Gaussian curvature ${\cal K}=-1$), in the sense that \eqref{1.6} satisfies the structure equations for such a surface. For further details, see \cite{chern, reyesjmp2000, reyes2000, reyes2002, reyes2006-sel, reyes2006-jde, reyes2011}. For a more in-depth discussion in connection with Cauchy problems, see \cite{freire-ch,freire-dp,freire2026}. Explicitly, the metric (first fundamental form) $g=\omega_1^2+\omega_2^2$ for the surface determined by \eqref{1.6} is
\bb\label{1.8}
\ba{lcl}
g&=&\ds{\Big[(u-u_{xx})^2+\Big(\mu (u-u_{xx})\pm \sqrt{1+\mu^2}\Big)^2\Big]dx^2}\\
\\
&+&2\ds{\Big(2u(u-u_{xx})-2(u-u_x)^2\Big)\Big[(1+\mu^2)(u-u_{xx}) \pm \mu\sqrt{1+\mu^2}\Big]dxdt}\\
\\
&+&\ds{(1+\mu^2)\Big(2u(u-u_{xx})-2(u-u_x)^2\Big)^2dt^2}.
\ea
\ee

\begin{theorem}\label{teo1.4}
    Let $u_0\in H^4(\mathbb{K})$ be an initial datum satisfying the condition in Theorem \ref{teo1.2} and $u$ be the corresponding solution to \eqref{1.4}. Then \eqref{1.8} defines the metric of a surface with Gaussian curvature ${\cal K}=-1$ such that $g_{12}\rightarrow\infty$ as $t$ approaches $T_0$ given in Theorem \ref{teo1.2}.
\end{theorem}

{\bf Novelty of the manuscript.} Our work addresses the problem of higher order regular solutions with respect to the $x-$variable. This has an evident relevance in the study of surfaces, like those performed in \cite{pri-na, pri-jde,freire-tito-sam,freire-ch,nilay,nazime,tito}. Usually works studying qualitative properties of solutions have a framework of lower regularity, although higher regularity is not an unheard topic in qualitative studies, see \cite{pri-na,liu-jde}. However, in line with these two references, when higher regular solutions are considered, usually they are investigated in the analytic level. In general, when finite regularity is investigated, it rarely goes up to arbitrary, but finite order, with respect to $x$. This paper intends to fill this gap. 

One point we would like to emphasise here is the approach we followed. We divided the proof of {Theorem \ref{1.1}} into several technical steps, so that the reader can follow step by step an extremely technical demonstration. Moreover, the way we present our result can probably be adapted for other similar equations when addressing analogous problems.

Our next target is the investigation of the existence of finite time solutions. For the non-periodic case some previous works, such as \cite{li-na,li-jmaa} have already established conditions on the initial datum so that solutions of \eqref{1.4} blow up. However, to the best of our knowledge, nothing has been reported for periodic solutions. Our Theorem \ref{teo1.2} not only presents a scenario for which we have periodic solutions blowing-up, but also gives new conditions for the existence of finite solutions for the non-periodic case.

Finally, we explore the consequences of the solutions described by our theorems \ref{teo1.1} and \ref{1.2} on the PSS determined by \eqref{1.6}--\eqref{1.8} determined by the solutions of \eqref{1.4} with initial data satisfying either the conditions in Theorem \ref{teo1.1} or \ref{teo1.2}.

{\bf Outline and strategy of the paper.} In Section \ref{sec2}, we present a number of propositions enclosing nearly all technicalities we have to tackle in order to prove our main result. We close it with a theorem that, essentially, gives sufficient conditions for the existence of a global solution at the $H^3(\mathbb{K})$ level. Once such a result is established, we can reach $H^{n+2}(\mathbb{K})$ level, for any $n\geq 1$. In Section \ref{sec3}, we give a proof of {Theorem \ref{1.1}}.

Our strategy to establish the main result in this paper is the following:
\begin{itemize}
\item[(a)] We define a hierarchy of functionals $I_n(t)$, each one equivalent to $\|u(t,\cdot)\|_{H^n}-$norm; 
\item[(b)] We then prove that $I_n(t)$ is bounded provided that the member $I_{n-1}(t)$ is bounded;
\item[(c)] We show that the conditions in Theorem \ref{teo1.1} implies 
$$\|u\|_{H^1}^2\leq \|u_0\|_{H^1}^2 e^{10\int_0^t\|u_x\|_{L^\infty(\mathbb{K})}d\tau}\leq  \|u_0\|_{H^1}^2 e^{At}=:C_0^2(t),$$
for some optimal constant $A>0$ since $\|u_x\|_{L^\infty(\mathbb{K})}$ norm is bounded. This estimate is valid at any finite time, therefore is a global bound for $H^1(\mathbb{K})$ norm of $u$. We provide this inequality to show that  $\|u\|_{L^\infty(\mathbb{K})}$ is also bounded, since
$\|u\|_{L^\infty(\mathbb{K})} \leq  \|u\|_{H^1} \leq C_0$ by Sobolev embedding theorem. 
\item[(d)] The two facts above trigger a process that ensures the validity of our main result.
\end{itemize}

Concerning the blow-up of solutions, unlike other CH type equations, where the operator $1-\p_x^2$ and its inverse play vital importance for addressing problems of qualitative nature, e.g, see \cite{const1997, const1998-1,const1998-2,const1998-3,const2000-1}, for \eqref{1.1} we instead use the operator $1-\p_x$ and its inverse. This allows us to establish conditions for $u-u_x\rightarrow-\infty$ as long as $t$ approaches a finite value determined by the $L^1(\mathbb{K})$ norm of the quantity $u-u_0'$. This is done in section \ref{sec4}. 

The geometric consequences of theorems \ref{teo1.1} and \ref{teo1.2}, namely, theorems \ref{teo1.3} and \ref{teo1.4}, are established in section \ref{sec5}.

\section{Preliminaries}\label{sec2}

Henceforth, unless otherwise stated, $n\in\N$ is a fixed natural number. Throughout this section we assume that $u$ is $C^n$ with respect to $x$ and $C^1$ with respect to $t$ and a member of $L^1(\mathbb{K})$ as well. Also, it is presupposed that $(1+\p_x)\Lambda^{-2}u^2$ is formally well-defined. In addition, $c_n$ stands for a generic constant depending on $n$

From the Leibniz formula for differentiation, we have
\begin{equation}\label{2.1}
    \p_x^n u^2=\sum_{k=0}^n \binom{n}{k} (\p_x^{n-k}u)(\p_x^{k}u),
\end{equation}
where
$$
    \binom{n}{k} = \left\{
    \ba{lcl}
    \ds{\frac{n!}{k!(n-k)!}}&\text{if}&n\geq k,\\
    \\
    0&\text{otherwise},&
    \ea\right.
$$
is the usual binomial coefficient. We define the quantities
\bb\label{2.2}
k_{n}(t):=\max_{0\leq j\leq n}\|\p_x^j u(t,\cdot)\|_{L^\infty(\mathbb{K})},
\ee
\bb\label{2.3}
J_m(t,x):=\f{1}{2}\sum_{j=0}^m \p_x^ju(t,x)^2,\quad 0\leq m\leq n
\ee
and
\bb\label{2.4}
I_m(t)=\int_\mathbb{K} J_m(t,x)dx,\quad 0\leq m\leq n.
\ee

\begin{proposition}\label{prop3.1}
    For $0\leq k\leq \ell<n$, we have
    $
    |\p_x^k u||\p_x^\ell u||\p_x^n u|\leq k_{n-1}(t) J_n(t,x).
    $
\end{proposition}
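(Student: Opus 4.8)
The plan is to peel off one of the two low-order factors using the $L^\infty$ bound that is built into $k_{n-1}(t)$, and then handle the remaining product of two derivatives by an elementary Young-type inequality. First I would observe that since $0\leq k\leq \ell<n$, in particular $k\leq n-1$, so at the point $x$ we have $|\p_x^k u(t,x)|\leq \|\p_x^k u(t,\cdot)\|_{L^\infty}\leq k_{n-1}(t)$ directly from the definition \eqref{2.2}. Hence it suffices to prove the reduced inequality $|\p_x^\ell u(t,x)|\,|\p_x^n u(t,x)|\leq J_n(t,x)$.

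For the reduced inequality I would apply $ab\leq \tfrac12(a^2+b^2)$ with $a=|\p_x^\ell u(t,x)|$ and $b=|\p_x^n u(t,x)|$, obtaining $|\p_x^\ell u|\,|\p_x^n u|\leq \tfrac12\big((\p_x^\ell u)^2+(\p_x^n u)^2\big)$. Because $\ell<n$, the indices $\ell$ and $n$ are distinct and both belong to $\{0,1,\dots,n\}$, so $(\p_x^\ell u)^2+(\p_x^n u)^2$ is a genuine sub-sum of $\sum_{j=0}^n(\p_x^j u)^2$; since every summand is nonnegative, $\tfrac12\big((\p_x^\ell u)^2+(\p_x^n u)^2\big)\leq \tfrac12\sum_{j=0}^n(\p_x^j u)^2=J_n(t,x)$ by \eqref{2.3}.

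Combining the two estimates gives $|\p_x^k u|\,|\p_x^\ell u|\,|\p_x^n u|\leq k_{n-1}(t)\,|\p_x^\ell u|\,|\p_x^n u|\leq k_{n-1}(t)\,J_n(t,x)$, which is the assertion. I do not expect a genuine obstacle here; the only points requiring care are that the factor controlled by $k_{n-1}(t)$ must be one of the low-order derivatives (guaranteed by $k\leq\ell<n$) and that $\ell\neq n$, so that no squared term gets double-counted when comparing against $J_n$. One could just as well bound $|\p_x^\ell u|$ by $k_{n-1}(t)$ and then apply Young's inequality to $|\p_x^k u|\,|\p_x^n u|$ (using $k<n$); the argument is symmetric in this choice.
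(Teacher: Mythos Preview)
Your proof is correct and follows exactly the approach the paper indicates: bound one low-order factor by $k_{n-1}(t)$ via \eqref{2.2}, then apply $2ab\leq a^2+b^2$ and compare against the full sum defining $J_n$ in \eqref{2.3}. The paper records this as a one-line remark citing \eqref{2.2}, \eqref{2.3}, and the inequality $2ab\leq a^2+b^2$; you have simply unpacked the details.
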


\begin{proof}
It follows from \eqref{2.2}, \eqref{2.3} and the fact that $2ab\leq a^2+b^2$, for any numbers $a$ and $b$.
\end{proof}

\begin{proposition}\label{prop3.2}
    We have $|\p^n_xu^2|\leq 2^n J_n$.
\end{proposition}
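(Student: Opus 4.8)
The plan is to bound $|\p_x^n u^2|$ using the Leibniz expansion \eqref{2.1} together with the elementary inequality $2ab \leq a^2 + b^2$. Writing $\p_x^n u^2 = \sum_{k=0}^n \binom{n}{k} (\p_x^{n-k}u)(\p_x^k u)$, I would apply the triangle inequality to get $|\p_x^n u^2| \leq \sum_{k=0}^n \binom{n}{k} |\p_x^{n-k}u|\,|\p_x^k u|$, and then estimate each product $|\p_x^{n-k}u|\,|\p_x^k u| \leq \tfrac12\big((\p_x^{n-k}u)^2 + (\p_x^k u)^2\big)$. Since both $n-k$ and $k$ lie between $0$ and $n$, each of the squares appearing is one of the terms $2\cdot\tfrac12(\p_x^j u)^2$ with $0 \leq j \leq n$, hence is bounded by $2 J_n$ in view of the definition \eqref{2.3}.

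Putting this together, each term $\binom{n}{k}|\p_x^{n-k}u|\,|\p_x^k u|$ is at most $\binom{n}{k}\cdot\tfrac12(2J_n + 2J_n) = 2\binom{n}{k}J_n$; actually a slightly more careful accounting gives $\binom{n}{k}\cdot\tfrac12\big((\p_x^{n-k}u)^2+(\p_x^k u)^2\big) \leq \binom{n}{k}\,2 J_n$ is wasteful, so instead I would note that summing $\tfrac12\big((\p_x^{n-k}u)^2+(\p_x^k u)^2\big)$ over $k$ with binomial weights, and using $(\p_x^j u)^2 \leq 2 J_n$ for every relevant $j$, yields $|\p_x^n u^2| \leq \left(\sum_{k=0}^n \binom{n}{k}\right) \cdot 2 J_n = 2^{n+1} J_n$. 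To recover the sharper constant $2^n$ claimed in the statement, I would instead observe that the sum $\sum_{k=0}^n \binom{n}{k} \tfrac12\big((\p_x^{n-k}u)^2 + (\p_x^k u)^2\big)$ equals $\sum_{k=0}^n \binom{n}{k}(\p_x^k u)^2$ by the symmetry $k \mapsto n-k$ of the binomial coefficients, and then bound this by $\big(\max_j (\p_x^j u)^2\big)\sum_{k=0}^n\binom{n}{k} \leq 2 J_n \cdot 2^n / 2$? That still is not quite $2^n J_n$, so the genuinely clean route is: $\sum_{k=0}^n \binom{n}{k}(\p_x^k u)^2 \leq 2^n \max_{0\le k\le n}(\p_x^k u)^2 \leq 2^n \cdot 2 J_n$ — hence I expect the natural bound to be $2^{n+1}J_n$, and obtaining the stated $2^n J_n$ will require exploiting that not all squares can simultaneously equal the maximum, or absorbing a factor via a tighter pairing argument.

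The only real subtlety, then, is tracking the constant: the structure of the proof (Leibniz formula plus $2ab \leq a^2+b^2$, exactly the tools flagged in Proposition~\ref{prop3.1}) is routine, but the exact power of $2$ depends on how one pairs the terms. I would present the argument so that the symmetrization $k \mapsto n-k$ collapses the two squares into one, giving $|\p_x^n u^2| \leq \sum_{k=0}^n \binom{n}{k}(\p_x^k u)^2 = \sum_{k=0}^n \binom{n}{k}\cdot 2\cdot\tfrac12(\p_x^k u)^2 \leq 2\Big(\sum_{k=0}^n\binom{n}{k}\Big) J_n = 2^{n+1}J_n$ — and if the paper truly wants $2^n J_n$, the cleanest fix is to note $\tfrac12(\p_x^k u)^2$ is a single summand of $J_n$ so $\sum_k \binom{n}{k}\tfrac12(\p_x^k u)^2 \leq J_n \max_k \binom{n}{k}\cdot(\text{number of terms})$ is too crude, whereas bounding by the full sum $\sum_{k}\binom nk \cdot (2J_n) $ overcounts. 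I therefore anticipate the main (minor) obstacle is simply justifying the displayed constant $2^n$; everything else is a one-line consequence of \eqref{2.1} and the definition of $J_n$ in \eqref{2.3}.
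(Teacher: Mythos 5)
Your overall strategy is the paper's strategy --- the Leibniz formula \eqref{2.1} combined with $2ab\le a^2+b^2$ --- but you do not actually close the argument: you end with the weaker bound $2^{n+1}J_n$ and explicitly leave open how to reach $2^nJ_n$. That is a genuine gap. The source of the loss is that you spend the factor $2^n$ twice: you bound each individual square $(\p_x^ju)^2$ by the \emph{entire} sum $\sum_{j=0}^n(\p_x^ju)^2=2J_n$, and then on top of that you sum the binomial coefficients to $2^n$. The inequality closes if the factor $2^n$ is spent only once. The paper's one-line version: for each fixed $k$, the two quantities $(\p_x^{n-k}u)^2$ and $(\p_x^ku)^2$ are among the summands of $\sum_{j=0}^n(\p_x^ju)^2=2J_n$, so their arithmetic mean satisfies $\f{(\p_x^{n-k}u)^2+(\p_x^ku)^2}{2}\le J_n$, and then $|\p_x^nu^2|\le\sum_{k=0}^n\binom{n}{k}J_n=2^nJ_n$.

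Your own symmetrized expression already contains the cleanest rigorous route, had you distributed the constants the other way: bound each \emph{coefficient} by its maximum and keep the full sum of squares, i.e.
$$
\sum_{k=0}^n\binom{n}{k}(\p_x^ku)^2\;\le\;\Big(\max_{0\le k\le n}\binom{n}{k}\Big)\sum_{k=0}^n(\p_x^ku)^2\;=\;\binom{n}{\lfloor n/2\rfloor}\cdot 2J_n\;\le\;2^{n-1}\cdot 2J_n\;=\;2^nJ_n,
$$
using $\binom{n}{\lfloor n/2\rfloor}\le 2^{n-1}$ for $n\ge1$ (immediate from Pascal's rule, since the central coefficient of row $n$ is a sum of two entries of row $n-1$). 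This version is in fact slightly safer than the paper's term-by-term bound, which is loose at the diagonal term when $n$ is even (there $\f{(\p_x^{n/2}u)^2+(\p_x^{n/2}u)^2}{2}=(\p_x^{n/2}u)^2$ need not be $\le J_n$ pointwise, although the slack in the off-diagonal terms saves the total). Note finally that the hypothesis $n\ge1$ is genuinely used: for $n=0$ the claim would read $u^2\le J_0=\f12u^2$, which is false.
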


\begin{proof}
    From \eqref{2.1} we have
    $$
    |\p_x^n u^2|\leq \sum\limits_{k=0}^n \binom{n}{k}\f{(\p_x^{n-k}u)^2+(\p_x^{k}u)^2}{2}\leq \sum\limits_{k=0}^n \binom{n}{k} J_n(t,x)=2^n J_n(t,x),
    $$
    which is the desired result.
\end{proof}

\begin{proposition}\label{prop3.3}
    The following identity formally holds:
    $$
    \p_x^n(1+\p_x)\Lambda^{-2}=(1+\p_x)\Lambda^{-2}-\sum\limits_{k=0}^{n-1}\p_x^k,
    $$ where $\p_x^0:=1$.
\end{proposition}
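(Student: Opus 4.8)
The plan is to argue by induction on $n$, the engine being the operator identity
\[
\p_x^2\Lambda^{-2}=\Lambda^{-2}-1 .
\]
This follows at once from the definition $\Lambda^{-2}f=g\ast f$ together with $g-g''=\delta$, which one checks directly from \eqref{1.3} in both the non-periodic and the periodic cases: the unit jump of $\p_x g$ across $x=0$ produces exactly the Dirac mass. Indeed, since convolution with $g$ commutes with $\p_x$, one gets $\Lambda^{-2}f-\p_x^2\Lambda^{-2}f=(g-g'')\ast f=f$, that is, $\p_x^2\Lambda^{-2}=\Lambda^{-2}-1$ as operators (here $1$ denotes the identity, consistently with the convention $\p_x^0:=1$ in the statement).

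For the base case $n=1$ I would write
\[
\p_x(1+\p_x)\Lambda^{-2}=\p_x\Lambda^{-2}+\p_x^2\Lambda^{-2}=\p_x\Lambda^{-2}+\Lambda^{-2}-1=(1+\p_x)\Lambda^{-2}-\p_x^0 ,
\]
which is the asserted identity for $n=1$.

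For the inductive step, assume $\p_x^n(1+\p_x)\Lambda^{-2}=(1+\p_x)\Lambda^{-2}-\sum_{k=0}^{n-1}\p_x^k$. Applying $\p_x$ to both sides and using the base case to rewrite $\p_x(1+\p_x)\Lambda^{-2}$ on the right-hand side yields
\[
\p_x^{n+1}(1+\p_x)\Lambda^{-2}=\p_x(1+\p_x)\Lambda^{-2}-\sum_{k=0}^{n-1}\p_x^{k+1}=(1+\p_x)\Lambda^{-2}-1-\sum_{k=1}^{n}\p_x^{k}=(1+\p_x)\Lambda^{-2}-\sum_{k=0}^{n}\p_x^k ,
\]
which is precisely the claim with $n$ replaced by $n+1$; this closes the induction. (Alternatively one could split according to the parity of $n$, iterating $\p_x^2\Lambda^{-2}=\Lambda^{-2}-1$ and telescoping, but the one-step induction is shorter.)

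Since every manipulation above is carried out at the level of formal operator identities — exactly the regularity caveat contained in the word ``formally'' in the statement — there is essentially no analytic obstacle to overcome. The only point requiring a moment's care is the verification that the kernels in \eqref{1.3} satisfy $g-g''=\delta$, equivalently that $\Lambda^{-2}$ is a genuine right inverse of $\Lambda^{2}=1-\p_x^{2}$; this is where the jump of $\p_x g$ at the origin (and, in the periodic case, the normalisation by $2\sinh(1/2)$) enters, and it is the single computation I would spell out in full.
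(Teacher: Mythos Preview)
Your proof is correct and follows essentially the same approach as the paper: both argue by induction on $n$, with the key ingredient being the operator identity $\p_x^2\Lambda^{-2}=\Lambda^{-2}-1$, and both handle the base case $n=1$ and the inductive step in the same way. Your write-up adds a bit more detail on why $g-g''=\delta$, but this is simply elaboration rather than a different argument.
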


\begin{proof}
    Since $\Lambda^2=1-\p_x^2$, then $\p_x^2\Lambda^{-2}=\Lambda^{-2}-1$. Let us define
    $$\N':=\Big\{n\in\N;\,\,\p_x^n(1+\p_x)\Lambda^{-2}=(1+\p_x)\Lambda^{-2}-\sum\limits_{k=0}^{n-1}\p_x^k\Big\}$$ and prove that $\N'=\N$. In fact: \begin{itemize}
\item[(a)] $1\in\N'$. We note that
$$
\p_x(1+\p_x)\Lambda^{-2}=\p_x\Lambda^{-2}+\p_x^2\Lambda^{-2}=(1+\p_x)\Lambda^{-2}-1,
$$
where we used the identity above.
\item[(b)] Now suppose that $n\in\N'$. Then
$$
\p_x^{n+1}(1+\p_x)\Lambda^{-2}=\p_x\Big((1+\p_x)\Lambda^{-2}-\sum\limits_{k=0}^{n-1}\p_x^k\Big)=(1+\p_x)\Lambda^{-2}-\sum\limits_{k=0}^{n}\p_x^k.
$$
\end{itemize}
As a result, the fact that $n\in\N'$ implies that $n+1$ is a member of the same set as well. Therefore, we must have $\N'=\N$.
\end{proof}

The next result is concerned with integration by parts. 

\begin{proposition}\label{prop3.4}
    Provided that $(\p_x^ku)\p_x^{n-k} (1+\p_x)\Lambda^{-2}u^2\Big|_\mathbb{K}$ vanishes, $0\leq k\leq n$, the following identity holds
    $$
    \Big|\int_\mathbb{K}(\p_x^n u)(1+\p_x)\Lambda^{-2}u^2dx\Big|\leq \int_\mathbb{K}\Big|u(1+\p_x)\Lambda^{-2}u^2\Big|dx+(2^n-1)k_{n-1}(t) I_n(t).
    $$
\end{proposition}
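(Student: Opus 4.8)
The plan is to integrate by parts $n$ times, transferring every $x$-derivative off of $\p_x^n u$ and onto the non-local factor. Writing $v:=(1+\p_x)\Lambda^{-2}u^2$ for brevity, each of the $n$ successive integrations by parts produces a boundary term of the form $(\p_x^{a}u)(\p_x^{b}v)$ evaluated at $X$ with $a+b\le n$; all of these vanish by the hypothesis of the proposition, so that
\bb
\int_{\mathbb{K}}(\p_x^n u)\,(1+\p_x)\Lambda^{-2}u^2\,dx=(-1)^n\int_{\mathbb{K}} u\,\p_x^n(1+\p_x)\Lambda^{-2}u^2\,dx .
\ee

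The next step is to use the operator identity of Proposition \ref{prop3.3}, namely $\p_x^n(1+\p_x)\Lambda^{-2}=(1+\p_x)\Lambda^{-2}-\sum_{k=0}^{n-1}\p_x^k$, to split the right-hand side as
\bb
(-1)^n\int_{\mathbb{K}} u\,(1+\p_x)\Lambda^{-2}u^2\,dx-(-1)^n\sum_{k=0}^{n-1}\int_{\mathbb{K}} u\,\p_x^k u^2\,dx .
\ee
Taking absolute values and using the triangle inequality, the first integral is dominated by $\int_{\mathbb{K}}|u\,(1+\p_x)\Lambda^{-2}u^2|\,dx$, and it only remains to estimate $\sum_{k=0}^{n-1}\int_{\mathbb{K}}|u\,\p_x^k u^2|\,dx$.

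To handle this sum I would combine two facts already at hand. First, $\|u(t,\cdot)\|_{L^\infty}\le k_{n-1}(t)$, since $j=0$ is one of the indices in the maximum \eqref{2.2}. Second, Proposition \ref{prop3.2} applied with $k$ in place of $n$ gives $|\p_x^k u^2|\le 2^k J_k$, and by the pointwise monotonicity $J_k\le J_n$ for $k\le n$ (immediate from \eqref{2.3}) we get $|\p_x^k u^2|\le 2^k J_n$. Hence $|u\,\p_x^k u^2|\le 2^k k_{n-1}(t)J_n$; integrating over $\mathbb{K}$ and recalling \eqref{2.4} yields $\int_{\mathbb{K}}|u\,\p_x^k u^2|\,dx\le 2^k k_{n-1}(t)I_n(t)$, and summing the geometric series $\sum_{k=0}^{n-1}2^k=2^n-1$ gives precisely the stated bound.

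The one place where care is genuinely needed is the first step: one must check that every boundary term generated along the $n$ integrations by parts is indeed covered by the hypothesis $(\p_x^k u)\,\p_x^{n-k}(1+\p_x)\Lambda^{-2}u^2\big|_X=0$. When $\mathbb{K}=\s$ this is automatic from periodicity; when $\mathbb{K}=\R$ it is exactly the point at which the decay of $u$ and of the convolutions $g\ast u^2$, $(\p_x g)\ast u^2$ at infinity must be invoked. Once that is granted, the rest is routine: the algebraic identity of Proposition \ref{prop3.3} together with the elementary inequality of Proposition \ref{prop3.2}, followed by summation of a finite geometric progression.
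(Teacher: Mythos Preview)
Your proof is correct and follows essentially the same route as the paper: integrate by parts $n$ times, invoke Proposition~\ref{prop3.3} to rewrite $\p_x^n(1+\p_x)\Lambda^{-2}u^2$, and then bound $\sum_{k=0}^{n-1}\int_\mathbb{K}|u\,\p_x^k u^2|\,dx$ by $(2^n-1)k_{n-1}(t)I_n(t)$. The only cosmetic difference is that the paper expands each $\p_x^k u^2$ via the Leibniz formula and bounds the resulting triple products directly, whereas you appeal to Proposition~\ref{prop3.2} and the monotonicity $J_k\le J_n$; both paths yield the same geometric sum $\sum_{k=0}^{n-1}2^k=2^n-1$.
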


\begin{proof}
Integration by parts combined with Proposition \ref{prop3.3} read
$$
\ba{l}
\ds{\int_\mathbb{K}(\p_x^n u)(1+\p_x)\Lambda^{-2}u^2dx}=\ds{(-1)^n\int_\mathbb{K} u(\p_x^n(1+\p_x)\Lambda^{-2}u^2)dx}\\
\\
=\ds{(-1)^n\int_\mathbb{K} u\Big[(1+\p_x)\Lambda^{-2}u^2-\sum_{k=0}^{n-1}\p_x^ku^2\Big]dx}
=\ds{(-1)^n\int_\mathbb{K} u(1+\p_x)\Lambda^{-2}u^2dx+(-1)^{n+1}\sum_{k=0}^{n-1}\int_\mathbb{K}u\p_x^ku^2dx}.
\ea
$$

Using \eqref{2.1} once more, we have
$$
\ba{lcl}
\ds{\Big|\int_\mathbb{K}(\p_x^n u)(1+\p_x)\Lambda^{-2}u^2dx\Big|}&\leq&\ds{\Big|\int_\mathbb{K} u(1+\p_x)\Lambda^{-2}u^2dx\Big|+\sum_{k=0}^{n-1}\sum_{j=0}^k\binom{k}{j}\int_\mathbb{K}|u||\p_x^{k-j}u||\p_x^ju|dx}\\
\\
&\leq&\ds{\Big|\int_\mathbb{K}(\p_x^n u)(1+\p_x)\Lambda^{-2}u^2dx\Big|+\sum_{k=0}^{n-1}\sum_{j=0}^k\binom{k}{j}k_{n-1}(t)I_n(t)}\\
\\
&\leq&\ds{\Big|\int_\mathbb{K}(\p_x^n u)(1+\p_x)\Lambda^{-2}u^2dx\Big|+(2^n-1)k_{n-1}(t)I_n(t)}.
\ea
$$
\end{proof}

\begin{proposition}\label{prop3.5}
    Recalling (\ref{1.3}), the following inequality holds 
    $$
    \Big|\int_\mathbb{K}u(1+\p_x)\Lambda^{-2}u^2dx\Big|\leq 2\|g\|_{L^\infty(\mathbb{K})}\|u\|_{L^1(\mathbb{K})}\|u\|_{L^2(\mathbb{K})}^2.
    $$
    
\end{proposition}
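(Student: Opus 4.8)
The plan is to peel $(1+\p_x)\Lambda^{-2}u^2$ into its two convolution constituents and control each with the elementary $L^\infty$–$L^1$ bound for convolutions. Using the notation introduced after \eqref{1.2}, namely $\Lambda^{-2}f=g\ast f$ and $\p_x\Lambda^{-2}f=(\p_x g)\ast f$, I would first write
$$\int_\mathbb{K} u\,(1+\p_x)\Lambda^{-2}u^2\,dx=\int_\mathbb{K} u\,(g\ast u^2)\,dx+\int_\mathbb{K} u\,\big((\p_x g)\ast u^2\big)\,dx,$$
so that, by the triangle inequality, it suffices to bound each of the two integrals on the right by $\|g\|_{L^\infty}\|u\|_{L^1}\|u\|_{L^2}^2$.

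The single fact that makes this work is the pointwise inequality $|\p_x g(x)|\le\|g\|_{L^\infty}$, valid in both settings of \eqref{1.3}: on $\R$ one has $|g(x)|=|\p_x g(x)|=\tfrac12 e^{-|x|}$ and $\|g\|_{L^\infty}=\tfrac12$, while on $\s$ one has $|\p_x g(x)|\le\tfrac12\le\tfrac{\cosh(1/2)}{2\sinh(1/2)}=\|g\|_{L^\infty}$ because $\sinh(1/2)\le\cosh(1/2)$. Consequently both $g$ and $\p_x g$ are dominated in absolute value by the constant $\|g\|_{L^\infty}$, and since $u\in L^2(\mathbb{K})$ the functions $g\ast u^2$ and $(\p_x g)\ast u^2$ are well-defined and bounded.

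With that in hand the estimate is immediate: for every $x\in\mathbb{K}$,
$$|(g\ast u^2)(x)|\le\|g\|_{L^\infty}\int_\mathbb{K} u(y)^2\,dy=\|g\|_{L^\infty}\|u\|_{L^2}^2,\qquad |((\p_x g)\ast u^2)(x)|\le\|g\|_{L^\infty}\|u\|_{L^2}^2,$$
and multiplying by $|u(x)|$ and integrating over $\mathbb{K}$ gives
$$\Big|\int_\mathbb{K} u\,(g\ast u^2)\,dx\Big|+\Big|\int_\mathbb{K} u\,\big((\p_x g)\ast u^2\big)\,dx\Big|\le 2\|g\|_{L^\infty}\|u\|_{L^2}^2\int_\mathbb{K}|u(x)|\,dx=2\|g\|_{L^\infty}\|u\|_{L^1}\|u\|_{L^2}^2,$$
which is the claim. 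There is no real obstacle here; the only point requiring a (trivial) verification is the uniform domination $\|\p_x g\|_{L^\infty}\le\|g\|_{L^\infty}$ read off from \eqref{1.3}, after which the proposition is a one-line consequence of Young's inequality.
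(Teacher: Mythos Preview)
Your proof is correct and essentially the same as the paper's: both express $(1+\p_x)\Lambda^{-2}u^2$ as the convolution $(g+\p_x g)\ast u^2$, use the pointwise domination $|\p_x g|\le|g|$ (the paper) or the slightly weaker $\|\p_x g\|_{L^\infty}\le\|g\|_{L^\infty}$ (you) together with Young's inequality to obtain $\|(1+\p_x)\Lambda^{-2}u^2\|_{L^\infty}\le 2\|g\|_{L^\infty}\|u\|_{L^2}^2$, and then pull the remaining factor $|u|$ out in $L^1$. The only cosmetic difference is that you bound the two convolution pieces separately before adding, whereas the paper bounds their sum directly.
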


\begin{proof}
    We first note that $(1+\p_x)\Lambda^{-2}u^2=(g+\p_xg)\ast u^2$. Since $|\p_xg|\leq |g|$, by Young inequality we have 
    \bb\label{2.5}
    |(1+\p_x)\Lambda^{-2}u^2|\leq 2\|g\|_{L^\infty(\mathbb{K})} \|u^2\|_{L^1(\mathbb{K})}=2\|g\|_{L^\infty(\mathbb{K})} \|u\|^2_{L^2(\mathbb{K})}.
    \ee
    Taking this into account, we have
    $$
    \Big|\int_\mathbb{K}u(1+\p_x)\Lambda^{-2}u^2dx\Big|\leq \|(1+\p_x)\Lambda^{-2}u^2\|_{L^\infty(\mathbb{K})}\int_\mathbb{K}|u|dx,
    $$
    that implies the result.
\end{proof}

\begin{proposition}\label{prop3.6}
    If $u$ is a solution of \eqref{1.2} then
    $$
    \p_t\Big(\f{u^2}{2}\Big)=u\p_xu^2-u^3+u(1+\p_x)\Lambda^{-2}u^2,
    $$
    $$
    \p_t\Big(\f{u_x^2}{2}\Big)=u\p_xu_x^2+2u_x^3-u^2u_x-2uu_x^2+(\p_xu)(1+\p_x)\Lambda^{-2}u^2,
    $$
    and, for $n\geq 2$,
    $$
    \ba{lcl}
    \ds{\p_t \Big(\f{(\p_x^n u)^2}{2}\Big)}&=&\ds{u\p_x(\p_x^n u)^2+(n+1)(\p_x u)(\p_x^n u)^2+\sum_{k=0}^{n-2}\binom{n}{k}(\p_x^{n-k-1} u)(\p_x^k u)(\p_x^n u)}\\
    \\
    &&\ds{-\sum_{k=0}^{n}\sum_{j=0}^k\binom{k}{j}(\p_x^{k-j}u)(\p_x^j u)(\p_x^nu)(\p_x^nu)+(\p_x^n u)(1+\p_x)\Lambda^{-2}u^2.}
    \ea
    $$
\end{proposition}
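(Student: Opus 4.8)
The plan is to turn \eqref{1.2} into a pointwise expression for $u_t$ and then differentiate it in $x$. As in the proof of Proposition \ref{prop3.3}, $\Lambda^2=1-\p_x^2$ gives $\p_x^2\Lambda^{-2}=\Lambda^{-2}-1$, hence $\p_x\Lambda^{-2}\bigl(u^2+(u^2)_x\bigr)=\p_x\Lambda^{-2}u^2+\p_x^2\Lambda^{-2}u^2=(1+\p_x)\Lambda^{-2}u^2-u^2$, so that \eqref{1.2} is equivalent to
$$u_t=2uu_x-u^2+(1+\p_x)\Lambda^{-2}u^2.$$
The first two identities then come out at once: multiplying this relation by $u$ and using $u\p_xu^2=2u^2u_x$ yields the formula for $\p_t(u^2/2)$; differentiating the relation once in $x$, invoking Proposition \ref{prop3.3} with $n=1$ to replace $\p_x(1+\p_x)\Lambda^{-2}u^2$ by $(1+\p_x)\Lambda^{-2}u^2-u^2$, multiplying by $u_x$ and using $u\p_xu_x^2=2uu_xu_{xx}$ yields the formula for $\p_t(u_x^2/2)$.

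For $n\ge2$ I would run the same three moves at order $n$. Since $\p_t\bigl((\p_x^nu)^2/2\bigr)=(\p_x^nu)\,\p_x^nu_t$, I differentiate the displayed expression for $u_t$ $n$ times and treat its three summands separately: expand $\p_x^n(2uu_x)=\p_x^{n+1}(u^2)$ and $\p_x^n(u^2)$ by the Leibniz rule \eqref{2.1}, and collapse the non-local term by Proposition \ref{prop3.3}, i.e.\ $\p_x^n(1+\p_x)\Lambda^{-2}u^2=(1+\p_x)\Lambda^{-2}u^2-\sum_{k=0}^{n-1}\p_x^k(u^2)$. Assembling the pieces, one reaches the intermediate identity
$$\p_t\Bigl((\p_x^nu)^2/2\Bigr)=(\p_x^nu)\,\p_x^{n+1}(u^2)-\sum_{k=0}^{n}(\p_x^nu)\,\p_x^k(u^2)+(\p_x^nu)(1+\p_x)\Lambda^{-2}u^2,$$
after which only term identification remains: in $(\p_x^nu)\p_x^{n+1}(u^2)$ one peels off the two extreme Leibniz terms carrying $\p_x^{n+1}u$, which recombine into $2u(\p_x^nu)(\p_x^{n+1}u)=u\p_x(\p_x^nu)^2$, and the terms carrying a single low-order factor $\p_x u$, which supply the coefficient of $(\p_x u)(\p_x^nu)^2$, the remaining central indices giving the stated binomial sum; expanding each $\p_x^k(u^2)$ by \eqref{2.1} once more turns the $k$-sum into the stated double sum, while $(\p_x^nu)(1+\p_x)\Lambda^{-2}u^2$ is already in final form.

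The hard part is entirely the combinatorial bookkeeping of that last step: isolating the boundary indices in the Leibniz expansion of $\p_x^{n+1}(u^2)$, checking that they reassemble into precisely the transport term together with the claimed coefficient of $(\p_x u)(\p_x^nu)^2$, and keeping track of the fact that the relevant indices are pairwise distinct exactly when $n\ge2$ — which is why $n=0$ and $n=1$ are listed and verified on their own. I would also record, for rigour, that no integration by parts enters here, so the regularity assumed at the beginning of the section suffices, with the understanding that the top-order derivative $\p_x^{n+1}u$ occurs only through the total derivative $\p_x(\p_x^nu)^2$.
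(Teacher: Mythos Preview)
Your proposal is correct and follows essentially the same route as the paper: differentiate the evolution equation $n$ times, expand $\p_x^{n+1}(u^2)$ by the Leibniz rule \eqref{2.1}, reduce the non-local piece via Proposition~\ref{prop3.3}, multiply by $\p_x^n u$, and then regroup the Leibniz terms so that the two boundary indices produce $u\,\p_x(\p_x^nu)^2$; the paper records exactly this reorganisation as a separate displayed identity and likewise treats $n=0,1$ by hand. Your preliminary rewriting of \eqref{1.2} as $u_t=\p_x(u^2)-u^2+(1+\p_x)\Lambda^{-2}u^2$ is a cosmetic simplification that the paper leaves implicit, not a different argument.
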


\begin{proof}
    Let us first assume $n\geq 2$. Applying the operator $\p_x^n$ to \eqref{1.2}, we have
    $$
    \p_t(\p_x^nu)-\sum_{k=0}^{n+1}\binom{n+1}{k}(\p_x^{n+1-k}u)(\p_x^ku)=(1+\p_x)\Lambda^{-2}u^2-\sum_{k=0}^n\sum_{j=0}^k\binom{k}{j}(\p_x^{k-j}u)(\p_x^ju),
    $$
    where we used \eqref{2.1} and Proposition \ref{prop3.3}. After multiplying the equation above by $(\p_x^n u)$, rearranging the terms, taking into account the identity
    $$
    \ds{\sum_{k=0}^{n+1}\binom{n+1}{k}(\p_x^{n+1-k}u)(\p_x^ku)(\p_x^nu)}=\ds{u\p_x(\p_x^nu)^2+(n+1)(\p_xu)(\p_x^nu)^2}
    \ds{+\sum_{k=0}^{n-2}\binom{n}{k+1}(\p_x^{n-k}u)(\p_x^{k+1}u)\p_x^nu}
    $$
    we get the result.
    
    Cases $n=0$ and $n=1$ are straightforward and for this reason they are omitted.
\end{proof}

\begin{proposition}\label{prop3.7}
Let 
$$
\ba{lcl}\kappa_0(t)&=&2(k_0(t)+2\|g\|_{L^\infty(\mathbb{K})}\|u(t,\cdot)\|_{L^1(\mathbb{K})}),\\
\\
\kappa_m(t)&=&2[(n+3)k_1(t)+2\|g\|_{L^\infty(\mathbb{K})}\|u(t,\cdot)\|_{L^1(\mathbb{K})}+\al_m k_{m-1}(t)],
\ea
$$ 
where
$$
\al_m=\left\{\ba{lcl}
2^m+2^{m-2}+n,&\text{if}&m\geq 2,\\
\\
0, &\text{otherwise.}&
\ea\right.
$$
    If $u$ is a solution of \eqref{1.2}, then
    $$
    \f{d}{dt}\|\p_x^nu(t,\cdot)\|_{L^2(\mathbb{K})}^2\leq 2 \kappa_m(t)I_m,
    $$
for any $0\leq m\leq n$.
\end{proposition}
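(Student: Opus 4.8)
The inequality is in fact a family of estimates, one for each $m$ with $0\le m\le n$, bounding $\frac{d}{dt}\|\p_x^m u(t,\cdot)\|_{L^2}^2$, and the pointwise identity provided by Proposition~\ref{prop3.6} comes in three shapes according to whether $m=0$, $m=1$, or $m\ge 2$. The plan is to treat these three cases separately but with a common skeleton. In each case one integrates the corresponding identity of Proposition~\ref{prop3.6} over $\mathbb{K}$; since $\tfrac{1}{2}\frac{d}{dt}\|\p_x^m u\|_{L^2}^2=\int_{\mathbb K}\p_t\big((\p_x^m u)^2/2\big)\,dx$, this puts the quantity to be estimated on the left and leaves, on the right, a finite number of local cubic terms, one non-local term, and boundary contributions. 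The boundary contributions are assumed to vanish, exactly as in the hypothesis of Proposition~\ref{prop3.4} (this is automatic in the periodic case and holds on $\mathbb{R}$ under the decay built into the function class in which we ultimately work).

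The first thing to do on the right-hand side is to dispose of the transport-type term $\int_{\mathbb K}u\,\p_x\big[(\p_x^m u)^2\big]\,dx$ (which for $m=0,1$ reads $\int_{\mathbb K}u\,\p_x u^2\,dx$, respectively $\int_{\mathbb K}u\,\p_x u_x^2\,dx$): I would integrate it by parts, turning it into $-\int_{\mathbb K}(\p_x u)(\p_x^m u)^2\,dx$, and then merge it with the explicit multiple of $\int_{\mathbb K}(\p_x u)(\p_x^m u)^2\,dx$ present in the identity. What survives is a single term whose coefficient is $O(m)$, hence $O(n)$, and which is controlled by $\|\p_x u\|_{L^\infty}\|\p_x^m u\|_{L^2}^2\le 2k_1(t)I_m(t)$; this is what produces the $(n+3)k_1(t)$ contribution to $\kappa_m$. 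The purely cubic term $\int_{\mathbb K}u^2u_x\,dx=\tfrac{1}{3}\int_{\mathbb K}\p_x(u^3)\,dx$ occurring when $m\le 1$ vanishes outright.

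What is then left among the local terms are the finite Leibniz sums. After the previous step each of their terms is a product of exactly one top-order factor $\p_x^m u$ with two factors of order strictly below $m$, save for the finitely many endpoint terms of the double sum, whose integrand is $|u|(\p_x^m u)^2$. For the generic terms I would invoke Proposition~\ref{prop3.1}, which gives the pointwise bound $\le k_{m-1}(t)J_m(t,x)$, while for the endpoint terms I would use $|u|\le k_0(t)$ together with $(\p_x^m u)^2\le 2J_m(t,x)$. Integrating over $\mathbb{K}$, summing the binomial weights (elementary identities such as $\sum_{k=0}^{m-1}2^k=2^m-1$ and $\sum_k\binom{m}{k}\le 2^m$), and using $I_j(t)\le I_m(t)$ for $j\le m$ as well as the monotonicity $k_0(t)\le k_1(t)\le\cdots\le k_{m-1}(t)$, this block is absorbed into a term $\alpha_m\,k_{m-1}(t)I_m(t)$ with $\alpha_m$ as defined. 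The non-local term $\int_{\mathbb K}(\p_x^m u)(1+\p_x)\Lambda^{-2}u^2\,dx$ is treated (for $m\ge 1$) by Proposition~\ref{prop3.4}, which moves all derivatives off $\p_x^m u$ and bounds it by $\big|\int_{\mathbb K}u(1+\p_x)\Lambda^{-2}u^2\,dx\big|+(2^m-1)k_{m-1}(t)I_m(t)$; then Proposition~\ref{prop3.5}, combined with $\|u\|_{L^2}^2=2I_0(t)\le 2I_m(t)$, bounds the first summand by $4\|g\|_{L^\infty}\|u(t,\cdot)\|_{L^1}I_m(t)$, which supplies the $2\|g\|_{L^\infty}\|u(t,\cdot)\|_{L^1}$ part of $\kappa_m$. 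For $m=0$ only Proposition~\ref{prop3.5} is needed, and the cases $m=0,1$, where $\alpha_m=0$, I would check directly to match $\kappa_0$ and $\kappa_1$. Adding the three groups of estimates and multiplying by $2$ gives $\frac{d}{dt}\|\p_x^m u\|_{L^2}^2\le 2\kappa_m(t)I_m(t)$.

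I expect the genuinely delicate point to be the combinatorial accounting in the previous paragraph: one has to apply Proposition~\ref{prop3.1} to exactly the right set of terms, peel off by hand the finitely many endpoint terms that carry two copies of $\p_x^m u$, and then verify that the various binomial sums that arise really do total at most $2\alpha_m$ (for the $k_{m-1}(t)$-weighted contributions) and at most $2(n+3)$ (for the $k_1(t)$-weighted ones). In contrast, the integrations by parts, the vanishing of the boundary terms, and the individual pointwise estimates are routine once Propositions~\ref{prop3.1}--\ref{prop3.5} are in hand; the only mild point to verify there is that one may differentiate $\|\p_x^m u(t,\cdot)\|_{L^2}^2$ under the integral sign, which follows from the assumed regularity of $u$.
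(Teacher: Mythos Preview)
Your proposal is correct and follows essentially the same route as the paper: integrate the pointwise identities of Proposition~\ref{prop3.6} over $\mathbb{K}$, kill the transport term $\int_{\mathbb K} u\,\p_x(\p_x^m u)^2\,dx$ by one integration by parts and merge it with the $(m+1)(\p_x u)(\p_x^m u)^2$ term, estimate the Leibniz sums via Proposition~\ref{prop3.1} and the binomial identities, and handle the non-local contribution through Propositions~\ref{prop3.4}--\ref{prop3.5}. Your treatment is in fact slightly more explicit than the paper's in isolating the ``endpoint'' summands carrying two copies of $\p_x^m u$ (which fall outside the range of Proposition~\ref{prop3.1} and must be bounded by $k_0(t)$ directly), but the underlying argument is the same.
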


\begin{proof}
    Integrating the expressions in Proposition \ref{prop3.6} with respect to $x$ over $\mathbb{K}$, we get
    $$
    \f{d}{dt}\int_\mathbb{K} \f{u^2}{2}dx=\int_\mathbb{K}\Big(-u^3+u(1+\p_x)u^2\Big)dx \leq\Big(\|u\|_{L^\infty(\mathbb{K})}+2\|g\|_{L^\infty(\mathbb{K})}\|u\|_{L^1(\mathbb{K})}\Big)\|u\|_{L^2(\mathbb{K})}^2\leq \kappa_0(t)I_0(t).
    $$
    The procedure for the case $n=1$ is similar and, therefore, it is omitted. Let us tackle the arbitrary case $n\geq 2$.

    Integrating again with respect to $x$ over $X$, integrating by parts the first term on the right hand side, we get
    
    $$
    \ba{lcl}
    \ds{\f{d}{dt}\int_\mathbb{K} \f{(\p_xu)^2}{2}dx}&=&\ds{\int_\mathbb{K} u\p_x(\p_xu)^2dx+(n+1)\int_\mathbb{K} (\p_x u)(\p_x^n u)^2dx}\\
    \\
    &&\ds{+\sum_{k=0}^{n-2}\binom{n}{k}\int_\mathbb{K} (\p_x^{n-k-1} u)(\p_x^k u)(\p_x^n u)dx}\\
    \\
    &&\ds{-\sum_{k=0}^{n}\sum_{j=0}^k\binom{k}{j}\int_\mathbb{K} (\p_x^{k-j}u)(\p_x^j u)(\p_x^nu)(\p_x^nu)dx+\int_\mathbb{K} (\p_x^n u)(1+\p_x)\Lambda^{-2}u^2dx.}
    \ea
    $$

    Therefore, we have,
    $$
    \ba{lcl}
    \ds{\f{d}{dt}\f{\|\p_x^nu\|_{L^2(\mathbb{K})}^2}{2}}&\leq&\ds{ (n+2)\|u_x(t,\cdot)\|_{L^\infty(\mathbb{K})}\|\p_x^nu\|_{L^2(\mathbb{K})}^2+2\al_nk_{n-1}(t)I_n(t)}\\
    \\
    &&\ds{+2\|g\|_{L^\infty(\mathbb{K})}\|u(t,\cdot)\|_{L^1(\mathbb{K})}\|u(t,\cdot)\|_{L^2(\mathbb{K})}^2+2\|u(t,\cdot)\|_{L^\infty(\mathbb{K})}I_n(t)}\\
    \\
    &\leq&2[(n+3)k_1(t)+2\|g\|_{L^\infty(\mathbb{K})}\|u(t,\cdot)\|_{L^1(\mathbb{K})}+\al_m k_{m-1}(t)]I_n(t).
    \ea
    $$
    Above we used propositions \ref{prop3.4}--\ref{prop3.5} and the fact that
    $$
    \al_n=n+1+\sum\limits_{k=0}^{n-2}\binom{n}{k}+\sum\limits_{k=0}^{n}\sum_{j=0}^k\binom{k}{j}.
    $$

    From \eqref{2.2}--\eqref{2.4} and Proposition \ref{prop3.1}, we conclude that the right hand side of the last equation is bounded from above by $\kappa_n(t)I_n(t)$, for some positive and continuous function $\kappa_n$.
\end{proof}

\begin{remark}\label{rem3.1}
    From the proof of Proposition \ref{prop3.7} we conclude that $\kappa_n$ is determined by \eqref{2.2}. As a result, as long as the norms $\|\p_x^\ell u(t,\cdot)\|_{L^\infty(\mathbb{K})}$, $1\leq \ell\leq n-1$, are bounded, so is $\kappa_n(t)$.
\end{remark}

\begin{theorem}\label{teo2.1}
    Suppose that $u\in C^0([0,T);H^{n+2}(\mathbb{K}))\cap C^1([0,T);H^{n+1}(\mathbb{K}))$ is a solution to \eqref{1.1}. If, for some $m\in\{1,\cdots,n\}$ we have
    \bb\label{2.6}
    \int_0^t \kappa_m(\tau)d\tau<\infty,\quad t\in\R,
    \ee
    then $I_N(t)<\infty$ for any $t\in\R$ and $N\in[0,\cdots, n]$.
\end{theorem}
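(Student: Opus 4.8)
The plan is to run an induction on $N \in \{0, 1, \dots, n\}$, proving that each $I_N(t)$ stays finite on every bounded time interval, given the single hypothesis \eqref{2.6} for some fixed $m$. The key observation is that $I_N(t) = \tfrac12\sum_{j=0}^N \|\p_x^j u(t,\cdot)\|_{L^2}^2$ is, up to constants, the square of the $H^N$-norm of $u$, so Proposition \ref{prop3.7} gives $\tfrac{d}{dt} I_N(t) = \tfrac12\sum_{j=0}^N \tfrac{d}{dt}\|\p_x^j u\|_{L^2}^2 \le \big(\sum_{j=0}^N \kappa_j(t)\big) I_N(t)$, i.e. a differential inequality of Gronwall type with a coefficient controlled by the functions $\kappa_j$. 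By Remark \ref{rem3.1}, each $\kappa_j(t)$ is a continuous function of $t$ that is bounded as long as the sup-norms $\|\p_x^\ell u(t,\cdot)\|_{L^\infty}$ for $1 \le \ell \le j-1$ (together with $\|u\|_{L^\infty}$ and $\|u\|_{L^1}$) are bounded, and these are themselves controlled by lower-order $I$'s via Sobolev embedding.

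First I would establish the base of the induction up to level $m$. The point of hypothesis \eqref{2.6} is precisely that $k_m(t) = \max_{0\le j\le m}\|\p_x^j u(t,\cdot)\|_{L^\infty}$ is integrable in time; since $k_0 \le k_1 \le \dots \le k_m$, all the lower $k_j$ are integrable too. This makes the coefficient $\sum_{j=0}^{m}\kappa_j(\tau)$ in the Gronwall inequality for $I_m$ integrable on $[0,t]$ — here one uses that $\kappa_j$ for $j \le m$ involves only $k_0, \dots, k_{j-1} \le k_{m-1}$ and the a priori controlled quantities $\|u\|_{L^1}$ (conserved or at least bounded, by the structure of the equation) and $\|g\|_{L^\infty}$ (a fixed constant). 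Applying Gronwall's lemma to $\tfrac{d}{dt}I_m(t) \le \big(\sum_{j\le m}\kappa_j(t)\big) I_m(t)$ yields $I_m(t) \le I_m(0)\exp\!\big(\int_0^t \sum_{j\le m}\kappa_j(\tau)\,d\tau\big) < \infty$ for every finite $t$. Since $I_0 \le I_1 \le \dots \le I_m$, this simultaneously bounds $I_N$ for all $N \le m$.

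Next I would push the induction from level $N-1$ to level $N$ for $N = m+1, \dots, n$. Assume $I_{N-1}(t)$ is finite on $[0,t]$ for every finite $t$. By Sobolev embedding $H^{N-1}(\mathbb{K}) \hookrightarrow C^{N-2}(\mathbb{K})$ (valid uniformly in both the line and circle cases), boundedness of $I_{N-1}$ on bounded time intervals gives boundedness of $\|\p_x^\ell u(t,\cdot)\|_{L^\infty}$ for $0 \le \ell \le N-2$ on those intervals; in particular $k_{N-1}(t)$ is locally bounded, hence locally integrable. By Remark \ref{rem3.1} this forces $\kappa_N(t)$ — and all $\kappa_j(t)$ with $j \le N$, which depend only on $k_0,\dots,k_{N-1}$ — to be locally bounded, so $\sum_{j=0}^{N}\kappa_j(\tau)$ is integrable on $[0,t]$. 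Plugging into Proposition \ref{prop3.7} summed over $0 \le j \le N$ gives $\tfrac{d}{dt}I_N(t) \le \big(\sum_{j\le N}\kappa_j(t)\big)I_N(t)$, and Gronwall again yields $I_N(t) \le I_N(0)\exp\!\big(\int_0^t\sum_{j\le N}\kappa_j(\tau)\,d\tau\big) < \infty$ on every bounded interval. This completes the inductive step and hence the theorem.

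The main obstacle, and the place requiring care rather than routine bookkeeping, is the bootstrapping of the sup-norm bounds: one needs to be sure that at level $N$ the coefficient $\kappa_N$ genuinely depends only on $\|\p_x^\ell u\|_{L^\infty}$ for $\ell \le N-1$ (so it is already under control from level $N-1$) and not on $\|\p_x^{N} u\|_{L^\infty}$ or higher — otherwise the Gronwall argument would be circular. Proposition \ref{prop3.7} together with Remark \ref{rem3.1} is exactly what licenses this, so the proof reduces to invoking those carefully and threading the Sobolev embedding $H^{N-1}\hookrightarrow C^{N-2}$ through the induction; one should also note that the time variable ranges over $\R$ in the statement but only finiteness on each bounded subinterval is needed, and the exponential Gronwall factor is finite there precisely because $\int_0^t k_m\,d\tau < \infty$ seeds the whole hierarchy.
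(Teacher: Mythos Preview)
Your overall plan---a Gronwall inequality for $I_N$ with coefficient built from the $\kappa_j$, followed by Sobolev embedding to climb the hierarchy---is the same strategy as the paper's. But the inductive step has a one-derivative gap. You write that from $I_{N-1}(t)<\infty$ and the embedding $H^{N-1}\hookrightarrow C^{N-2}$ one obtains $\|\p_x^\ell u(t,\cdot)\|_{L^\infty}$ bounded for $0\le\ell\le N-2$, and then conclude ``in particular $k_{N-1}(t)$ is locally bounded.'' That last clause does not follow: by definition $k_{N-1}(t)=\max_{0\le j\le N-1}\|\p_x^j u(t,\cdot)\|_{L^\infty}$ includes the case $j=N-1$, and $\|\p_x^{N-1}u\|_{L^\infty}$ is \emph{not} controlled by $\|u\|_{H^{N-1}}$. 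Since, by Proposition~\ref{prop3.7} and Remark~\ref{rem3.1}, the coefficient $\kappa_N$ contains the term $\al_N k_{N-1}(t)$, the Gronwall factor at level $N$ is not bounded by your inductive hypothesis, and the step $N-1\Rightarrow N$ does not close.

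The paper organises the bootstrap differently: rather than inducting on the $I_N$ directly, it propagates the hypothesis~\eqref{2.6} itself, arguing that if $\int_0^t k_m\,d\tau<\infty$ (together with the $L^1$ control~\eqref{2.7}) then $k_{m+1}(t)$ is finite, thereby reducing to the case $m=n$, where~\eqref{2.6} makes every $\kappa_j$ with $j\le n$ integrable at once and a single Gronwall application finishes. Your base case in fact buys one free level---since $\kappa_{m+1}$ involves only $k_m$, which is integrable by~\eqref{2.6}, you get $I_{m+1}<\infty$ without Sobolev---but beyond that you must either follow the paper's route of upgrading~\eqref{2.6} from level $m$ to $m+1$, or sharpen the trilinear estimates behind Proposition~\ref{prop3.7} so that the factor taken in $L^\infty$ always carries the \emph{lowest} derivative (which would replace $k_{N-1}$ by something like $k_{\lfloor N/2\rfloor}$ and make your induction close).
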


\begin{proof} 
We first observe the following: if, for some $m\leq n$ we have $I_m(t)<\infty$, for any $t\in\R$, then $\|u(t,\cdot)\|_{H^m}<\infty$, due to the relation  
$\|u\|_{H^m}=\sqrt{2I_m}.$

The Sobolev Embedding Theorem then implies that $\|\p_x^\ell u(t,\cdot)\|_{L^\infty(\mathbb{K})}<\infty$, for any $\ell\in\{1,\cdots,m-1\}$. In particular, $\|u(t,\cdot)\|_{L^\infty(\mathbb{K})}<\infty$, and so do $k_0(t)$, for any $t\in\R$.

Suppose that \eqref{2.6} holds for $m=n$.
    From Proposition \ref{prop3.7} we see that $k_\ell(t)\leq k_m(t)$, for any $\ell\leq m$. Therefore, we have
    $$
    \f{d}{dt}I_m(t)=\f{d}{dt}\sum_{k=0}^m\f{\|\p_x^ku(t,\cdot)\|_{L^2(\mathbb{K})}^2}{2}\leq m\kappa_m(t)I_m(t),
    $$
    and then, as long as \eqref{2.6} is valid for $m=n$, we have
    \bb\label{2.7}
    I_m(t)\leq I_m(0)e^{m\int_0^t \kappa_m(\tau)d\tau}<\infty.
    \ee

    Let us now assume that \eqref{2.6} holds for some $0\leq m<n$. Since 
    $$\kappa_m(t)=2[(m+3)k_1(t)+2\|g\|_{L^\infty(\mathbb{K})}\|u(t,\cdot)\|_{L^1(\mathbb{K})}+\al_m k_{m-1}(t)],$$
    the condition above tells us that
    \bb\label{2.8}
    \int_0^t \|\p_x^ju(\tau,\cdot)\|_{L^\infty(\mathbb{K})}d\tau<\infty,\quad 0\leq j\leq m-1
    \ee
    as well as
    \bb\label{2.9}
    \int_0^t\|u(\tau,\cdot)\|_{L^1(\mathbb{K})}d\tau<\infty,
    \ee
    for any $t\in\R$. However, \eqref{2.7} and \eqref{2.9} together implies that $k_{m+1}(t)$ is finite for any $t\in\R$. Repeating the argument, we reach to $m=n$. 
\end{proof}

\section{Global solutions}\label{sec3}

In view of Theorem \ref{teo2.1}, all we need to do is proving that $I_m(t)$ is bounded for finite values of $t$, for some $m\leq n$.

For both periodic and non-periodic cases, we observe that
$
\int_{\mathbb{K}}u_{xx}dx=0.
$
For the periodic case, this fact is enough to guarantee the existence of a point $\xi_t-1\in(0,1)$ such that $u_x(t,\xi_t-1)=0$, and $x_t$ such that $u_x(t,x_t)=0$ for each $t\in(0,T)$, respectively. 

Recalling the idea used in \cite{nilay}, we give the following lemma proved in a unified way.

\begin{lemma}\label{lem4.1}
Let $n\in \mathbb{N}. $ If $u_0\in H^n(\mathbb{K})\cap L^1(\mathbb{K})$, is such that $m_0\geq 0$, then there exists a constant $K>0$ such that the solution of \eqref{1.2} satisfies $\|u_x\|_{L^\infty(\mathbb{K})}\leq K$.
\end{lemma}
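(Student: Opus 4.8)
The plan is to establish the uniform bound on $\|u_x\|_{L^\infty}$ via a conserved (or near-conserved) sign condition on the momentum $m=u-u_{xx}$, together with the representation of $u$ and $u_x$ through convolution with the kernel $g$. First I would recall that $u=g\ast m$, so that $u_x=(\p_xg)\ast m$, and that $u-u_x=(g-\p_xg)\ast m$ and $u+u_x=(g+\p_xg)\ast m$. From \eqref{1.3} one checks directly that $g\pm\p_xg\geq 0$ on $\mathbb{K}$ in both the line and circle cases (on $\R$, $g-\p_xg=e^{-x}\geq0$ for $x>0$ and $g-\p_xg = e^{x}\cdot\text{(something nonneg)}$... more precisely $g+\p_x g$ is supported where $x\le 0$ and $g-\p_x g$ where $x\ge 0$, both nonnegative; on $\s$ the analogous cosh$\pm$sinh combinations are nonnegative). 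Hence if $m_0\geq 0$ and the sign of $m$ is propagated by the flow, then $u-u_x\geq0$ and $u+u_x\geq0$ for all $t$, which gives the pointwise bound $|u_x|\leq u$.

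The second ingredient is propagation of the sign of $m$. I would write the transport equation satisfied by $m$ along the characteristics $\dot q(t,x)=-2u(t,q(t,x))$ (note \eqref{1.2} is $m_t = 2u m_x + (\text{lower order})\cdot m$ after one rewrites $\p_x\Lambda^{-2}(u^2+(u^2)_x)$ appropriately; concretely $m_t - 2u m_x = c(t,x)\,m$ for a bounded coefficient $c$ built from $u$ and $u_x$). Then $m(t,q(t,x)) = m_0(x)\exp\!\big(\int_0^t c(\tau,q(\tau,x))\,d\tau\big)\geq0$, so $m(t,\cdot)\geq0$ for all $t$ in the existence interval. This requires $u$ to be regular enough for the characteristics to be well-defined — which holds since we are working at the $H^{n+2}$ level with $n\ge1$, so $u\in C^1$ at least.

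With $|u_x|\leq u$ in hand, I would then bound $\|u\|_{L^\infty}$. Here I would invoke strategy point (c) from the introduction: since $m_0\geq0$ and $m_0\in L^1$, the quantity $\int_\mathbb{K} m\,dx = \int_\mathbb{K} u\,dx$ is controlled (one shows $\frac{d}{dt}\int m\,dx$ is controlled, or that it is conserved/bounded), and combined with $u=g\ast m$ and Young's inequality, $\|u\|_{L^\infty}\leq \|g\|_{L^\infty}\|m\|_{L^1}$. Actually the cleanest route is: from $u+u_x\ge0$ and $u-u_x\ge 0$ we get $u\ge 0$, and then $\|u(t,\cdot)\|_{L^1}=\int u\,dx=\int m\,dx$; controlling the evolution of $\int m\,dx$ (it satisfies a closed differential inequality, or one uses the $H^1$ bound from (c)) yields $\|u(t,\cdot)\|_{L^\infty}\le \|u(t,\cdot)\|_{H^1}\le C_0(t)<\infty$ on any finite interval. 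Then $\|u_x\|_{L^\infty}\le \|u\|_{L^\infty}\le C_0(t)$. If the aim is a genuinely time-independent $K$, one needs the stronger fact that $\int_\mathbb{K} m\,dx$ is actually conserved (or non-increasing); I would verify this by integrating the $m$-equation, using that boundary/decay terms vanish, to get $\frac{d}{dt}\int m\,dx = -\int \p_x(\cdots)\,dx=0$, whence $\|u(t,\cdot)\|_{L^1}=\|u_0\|_{L^1}$ and $\|u_x\|_{L^\infty}\le \|g\|_{L^\infty}\|u_0\|_{L^1}=:K$.

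The main obstacle I anticipate is the bookkeeping around the sign propagation: one must (i) correctly derive the transport equation for $m$ from \eqref{1.2}, identifying the zeroth-order coefficient $c$ and checking it is bounded on finite time intervals (this uses $\|u\|_{L^\infty},\|u_x\|_{L^\infty}$ being a priori finite on $[0,T)$, which is available since we start from a local $H^{n+2}$ solution), and (ii) handle the periodic case with the same argument, using the existence of points where $u_x$ vanishes (mentioned just before the lemma) and the nonnegativity of the periodic kernel combinations in \eqref{1.3}. A secondary subtlety is whether $K$ should be time-dependent; the proof of Theorem \ref{teo1.1} only needs $\int_0^t k_m(\tau)\,d\tau<\infty$ for finite $t$ (see \eqref{2.6}), so even a locally-bounded $K=K(t)$ suffices, but the cleaner conserved-$L^1$-norm argument gives the uniform constant stated in the lemma.
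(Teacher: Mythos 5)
Your overall strategy is sound and genuinely different from the paper's in its second half. Both arguments rest on the same two pillars: conservation of $\int_{\mathbb{K}} m\,dx$ (obtained exactly as you propose, by integrating $m_t=\partial_x\big((1-\partial_x^2)u^2+u^2\big)$) and the propagation of the sign of $m$. Where you diverge is in how $m\geq 0$ and $\|m(t,\cdot)\|_{L^1}=\|m_0\|_{L^1}$ are converted into a bound on $u_x$: you use the convolution representation $u_x=(\partial_xg)\ast m$ and the pointwise nonnegativity of $g\pm\partial_xg$ (which you verify correctly in both the line and circle cases), giving $|u_x|\leq u\leq\|g\|_{L^\infty}\|m_0\|_{L^1}$ directly. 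The paper instead uses the existence of a point $\xi_t$ where $u_x$ vanishes, the fundamental theorem of calculus on $[\xi_t-1,x]$, and the sign-invariance of $u$ to get $-u_x(x,t)\leq\int m(r,t)\,dr=\|m_0\|_{L^1}$. Your kernel argument is cleaner and treats the two geometries uniformly without needing the zero of $u_x$; it also yields $u\geq0$ for free. Note also that the paper does not prove sign propagation at all but cites \cite[Lemma 5.5]{li-na} and \cite[Theorem 4.1]{nilay}, so your attempt at a self-contained characteristics argument is doing more work than the paper does.

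That characteristics step, however, contains a concrete error. The identity $m_t-2um_x=c(t,x)\,m$ with $c$ built from $u,u_x$ is false for this equation. A direct computation from $m_t=2(u^2)_x+(u^2)_{xx}-(u^2)_{xxx}$, substituting $u_{xx}=u-m$ and $u_{xxx}=u_x-m_x$, gives
\[
m_t-2um_x=(6u_x-2u)\,m+2(u-u_x)^2,
\]
so there is a leftover source term $2(u-u_x)^2$ that cannot be absorbed into a bounded multiple of $m$ (your exponential representation $m(t,q)=m_0\exp(\int c)$ is therefore not available). You are saved by the sign of that source: along $\dot q=-2u(t,q)$ one gets $\frac{d}{dt}\big[m(t,q)e^{-\int_0^t(6u_x-2u)d\tau}\big]=2(u-u_x)^2e^{-\int_0^t(6u_x-2u)d\tau}\geq0$, hence $m(t,q(t,x))\geq m_0(x)e^{\int_0^t(6u_x-2u)d\tau}\geq0$, which is all you need. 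With that correction (and the boundedness of $u,u_x$ on the local existence interval to justify the characteristics), your argument closes and yields the time-independent constant $K=\|g\|_{L^\infty}\|m_0\|_{L^1}$, consistent with the paper's $K_1=\|m_0\|_{L^1}$.
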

\begin{proof}
We first prove that $\|m(\cdot,t)\|_{L^1(\mathbb{K})}$ is constant for any $t$ as long as the solution exists. 
Recall (\ref{1.1}) and note that
$$
m_t=\p_t(u-u_{xx})=\p_x\Big((2-\p_x)(1+\p_x)u^2\Big)=\p_x\Big((1-\p_x^2)u^2+u^2)\Big).
$$
Integrating the relation above with respect to $x$ on $\mathbb{K}$, we obtain
$$
\f{d}{dt}\int_{\mathbb{K}} (u-u_{xx})dx=\Big((1-\p_x^2)u^2+u^2)\Big)\big|_{\mathbb{K}}=0,
$$
meaning that the $\|m(\cdot,t)\|_{L^1(\mathbb{K})}=\|m_0\|_{L^1(\mathbb{K})}.$

Now, assume that $m_0$ does not change sign and $m_0\geq 0$. Then,
$$
K:=\|m_0\|_{L^1(\mathbb{K})}=\|m(\cdot,t)\|_{L^1(\mathbb{K})}
$$
is a constant.

At this point, we pay attention to the subcases and write:
$$
K=\int_{\mathbb{K}} m(t,x)dr=\int_a^b m(t,x)dr
$$
is a constant, where $a$ and $b$ are constants determined by $\mathbb{K}$. Considering the periodic case ($\mathbb{K}=\s$), we get:
$$
\int_{\xi_{t}-1}^{\xi_t}m(t,x)dr \\
\geq\int_{\xi_{t}-1}^x (u-u_{xx})(r,t)dr=\int_{\xi_{t}-1}^x u(r,t)dr-u_x(x,t)\geq -u_x(x,t)$$

which holds for every $x\in[\xi_{t}-1, \xi_t]$. Similarly, the non-periodic case brings:
$$
\int_{-\infty}^{\infty}m(t,x)dr \\
\geq\int_{-\infty}^x (u-u_{xx})(r,t)dr=\int_{-\infty}^x u(r,t)dr-u_x(x,t)\geq -u_x(x,t)$$
which holds for every $x\in \R$. The last inequality, assuming that the improper integral is finite, is valid since 
$$
\frac{d}{dt}\int_{\R} u dx=\int_{\R} u_t dx=\int_{\R} [2uu_x+\partial_x\Lambda^{-2}(u^2+(u^2)_x]dx=0
$$
and $u_0\in L^1(\R)$.

For both periodic and non-periodic cases, we use the conclusions provided by \cite[Theorem 4.1]{nilay} and \cite[Lemma 5.5]{li-na} which guarantees that $u$ does not change sign provided that $m_0$ does not change sign. Taking into account the final results, we observe that $u_x$ is bounded from below in any case, since $K\geq u_x$.
Hence, proceeding similarly as in the previous lines, we can show that $u_x$ is bounded also from above and we can conclude that $\|u_x\|_{L^\infty(\mathbb{K})}$ norm is bounded, i.e. $\|u_x\|_{L^\infty(\mathbb{K})}\leq K$. 
\end{proof}

\section{Blow-up of solutions} \label{sec4}

In this section, we shall study the conditions for solutions of \eqref{1.1} blow-up. After applying $(1-\p_x)^{-1}$ on both sides of \eqref{1.1} we can alternatively write it as
\bb\label{4.1}
\p_t(u-u_{x})=2u_x(u-u_x)+2u(u_x-u_{xx}).
\ee

The above operation is not valid in general, but it is licit for functions in $H^s(\mathbb{K})$, see Appendix \ref{appendix}, where a deduction for the Green function \eqref{1.5} is presented.

\begin{proposition}\label{prop4.1}
    Assume that $u$ is a solution to \eqref{1.1} subject to an initial datum $u_0\in H^4(\mathbb{K})$. Fix $x\in\mathbb{K}$ and consider the problem
    \bb\label{4.2}
    \left\{
    \ba{lcl}
    \ds{\f{d}{dt}q(t,x)}&=&-2u(t,q(t,x)),\\
    \\
    q(0,x) &=&x.
    \ea
    \right.
    \ee

Then the problem has a unique solution $q\in C^1([0,T)\times\R)$. Moreover, 
\bb\label{4.3}
q_x(t,x)=e^{-2\int_0^t u_x(s,q(s,x))ds}>0
\ee
and the function $\phi(t,x)=(t,q(t,x))$ is a $C^0$ bijection fixing $\{0\}\times\R$, such that its restriction to any open set $X$ of $[0,T)\times\mathbb{K}$ is a diffeomorphism between $X$ and $\phi(X)$.
\end{proposition}

\begin{proof}
The existence and uniqueness of solutions to \eqref{4.2} can be proved following step-by-step the proof given in \cite[Theorem 3.1]{const2000-1} regardless $\mathbb{K}$ due to the local nature of the problem. In addition, the same proof shows also \eqref{4.3}. In particular, it implies that $\R\ni x\mapsto q(t,x)$ is a diffeomorfism of the line, for each fixed $t\in[0,T)$. This fact then implies that $\phi(t,x)=(t,q(t,x))$ is a bijection from $[0,T)\times\mathbb{K}$ into its image, $\phi(0,x)=(0,q(0,x))=(0,x)$. The fact that $\phi$ is a diffeomorphism between any open set $X$ of $[0,T)\times\mathbb{K}$ and its image follows from the fact that $q_x(t,\cdot)$ is a diffeomorphism for each fixed $t$.
\end{proof}

\begin{proposition}\label{prop4.2}
    Let $h(\cdot)$ and $w(\cdot,\cdot)$ be bounded and continuous functions, such that $h(x_0)=-\sigma_0$, $\sigma_0>0$, for some $x_0\in\mathbb{K}$, and $\inf\limits_{x\in\mathbb{K}}w(t,x)\geq -L$, for some $L>0$. For $t\geq0$ and $x\in\mathbb{K}$, define
    $$
    f(t,x)=\f{h(x)e^{\ds{{\int_0^t w(\tau,x)d\tau}}}}{1+2h(x)\ds{\int_0^t e^{\ds{\int_0^\tau w(s,x)ds}d\tau}}}.
    $$

If $2\sigma_0-L\in(0,1)$, then $f$ cannot be defined for all $t>0$. More precisely, we have
$$
f(t,x_0)\rightarrow-\infty\quad\text{as}\quad t\nnearrow T_0,
$$
for some $T_0=T_0(L,\sigma_0)$.
\end{proposition}

\begin{proof}
Under the given conditions, we have $e^{\int_0^t w(\tau,x)d\tau}\geq e^{-Lt}$ and then,
$$
\int_0^t e^{\ds{\int_0^\tau w(s,x)ds}}d\tau\geq \int_0^te^{-L\tau}d\tau=\f{1}{L}\Big(1-e^{-Lt}\Big).
$$

On the one hand, we have
$$
-2\sigma_0\int_0^t e^{\ds{\int_0^\tau w(s,x)ds}}d\tau\leq-\f{2\sigma_0}{L}(1-e^{-Lt}),
$$
that implies, at least for small values of $t$, 
$$
0<1-2\sigma_0\int_0^t e^{\ds{\int_0^\tau w(s,x)ds}}d\tau\leq 1-\f{2\sigma_0}{L}(1-e^{-Lt})
$$
and then,
\bb\label{4.4}
0<\f{L}{L-2\sigma_0(1-e^{-Lt})}\leq \f{1}{1-2\sigma_0\ds{\int_0^t e^{\ds{\int_0^\tau w(s,x)ds}}d\tau}}.
\ee

Combining the fact that
$$
-2\sigma_0e^{\ds{\int_0^t w(\tau,x)d\tau}}<0
$$
with \eqref{4.4} and the fact that $h(x_0)=-\sigma_0$, we have
\bb\label{4.5}
f(t,x_0)=\f{-2\sigma_0 e^{\ds{{\int_0^t w(\tau,x)d\tau}}}}{1-2\sigma_0\ds{\int_0^t e^{\ds{\int_0^\tau w(s,x)ds}d\tau}}}\leq\f{-2L\sigma_0}{L-2\sigma_0(1-e^{-Lt})}.
\ee

Let
$$
T_0:=-\f{1}{L}\ln{\Big(1-\f{L}{2\sigma_0}\Big)}.
$$

From \eqref{4.5} we conclude that $f(t,x_0)\rightarrow-\infty$ as $t\nnearrow T_0$.
\end{proof}

\begin{proposition}\label{prop4.3}
Let $u_0\in H^4(\mathbb{K})$. If $u_0-u_0'$ is either non-negative or non-positive, then the corresponding solution to \eqref{1.1} will inherit the same sign, in the following sense:
$$
\sign{(u-u_x)(\phi(t,x))}=\sign{(u-u_0')(x)},
$$
where $\phi(\cdot,\cdot)$ is the function given in Proposition \ref{prop4.1}.
\end{proposition}

\begin{proof}
    Let $q$ and $\phi$ as given by Proposition \ref{prop4.1}. From \eqref{4.1} and \eqref{4.2}, we have
\bb\label{4.6}
\p_t(u-u_x)(\phi(t,x))=(u_t-u_{tx}-2u(u_x-u_{xx}))(\phi(t,x))=2[u_x(u-u_x)](\phi(t,x)).
\ee

Integrating the above equation, we have
\bb\label{4.7}
(u-u_x)(\phi(t,x))=(u-u_0')(x)e^{2\ds{\int_0^t u_x(\phi(\tau,x))}d\tau}.
\ee

The result is a consequence of \eqref{4.7}.
\end{proof}

\begin{proposition}\label{prop4.4}
    Assume that $u_0\in H^4(\mathbb{K})$ is a non-trivial initial datum satisfying $(u_0-u_0')(x)\leq 0$, for all $x\in\mathbb{K}$, and $u_0-u_0'\in L^1$. Then $\|(u-u_x)(t,x)\|_{L^1(\mathbb{K})}$ is constant.
\end{proposition}

\begin{proof}
Let $u$ be the corresponding solution to \eqref{1.1}. Then it belongs to $C^0([0,T),H^4(\mathbb{K}))\cap C^1([0,T),H^3(\mathbb{K}))$ and thus, it satisfies \eqref{4.1}, that is equivalent to
    $$\p_t(u-u_{x})=\p_x[2u(u-u_{xx})].$$
    Integrating with respect to $x$ over $\mathbb{K}$, we have
    $$
    \f{d}{dt}\int_\mathbb{K}(u-u_{x})dx=[2u(u-u_{xx})]\Big|_\mathbb{K}=0,
    $$
    meaning that
    $$
    \int_\mathbb{K}(u-u_{x})(t,x)dx=\int_\mathbb{K}(u-u_0')(x)dx.
    $$

    Since $u_0$ is non-trivial, then $\|u_0-u_0'\|_{L^1(\mathbb{K})}>0$. On the other hand, by Proposition \ref{prop4.3} and the fact that $u_0-u_0'\leq0$, we have
    $$
    \|u-u_x\|_{L^1(\mathbb{K})}=\int_{\mathbb{K}}|(u-u_x)(t,x)|dx=-\int_\mathbb{K}(u-u_{x})(t,x)dx=-\int_\mathbb{K}(u-u_0')(x)dx,
    $$
    that proves the result.
\end{proof}

Before proving Theorem \ref{teo1.2}, we observe that from \eqref{1.5} we have
$$
\|G\|_{L^\infty(\mathbb{K})}=\left\{\ba{ll}
    \ds{1}, &\,\,\text{when }\mathbb{K}=\R,\\
    \\
    \ds{\f{e}{e-1}},&\,\,\text{when }\mathbb{K}=\s.
\ea
\right.
$$

{\bf Proof of Theorem \ref{teo1.2}}. Assume that $(u_0-u_0')(x_0)=-\sigma_0<0$, for some $x_0\in\mathbb{K}$. By Proposition \ref{prop4.3} we have $(u-u_x)(t,x)<0$ as long as the solution exists. Since
$$
(1-\p_x)u=u-u_x\Rightarrow u=(1-\p_x)^{-1}(u-u_{x})=G\ast (u-u_x),
$$
where $G$ is given by \eqref{1.5}, we have $u(t,x)<0$ whenever it is defined. Let 
$$\psi=(u-u_x)\circ\phi,$$
where $\phi$ is the function given in Proposition \ref{prop4.1}. Then
$$
u_x\circ\phi=u\circ\phi-\psi=:w-\psi.
$$

With these new functions, \eqref{4.6} is equivalent to
\bb\label{4.8}
\p_t\psi-2w\psi+2\psi^2=0.
\ee

Therefore, from \eqref{4.8}, \eqref{4.1} and \eqref{1.1}, we obtain the Cauchy problem
\bb\label{4.9}
\left\{
\ba{l}
\p_t\psi-2w\psi+2\psi^2=0,\\
\\
\psi(0,x)=(u_0-u_0')(x)=:h(x).
\ea
\right.
\ee

Since \eqref{4.8} is a Bernoulli equation, the solution of \eqref{4.9} is
\bb\label{4.10}
\psi(t,x)=\f{h(x)e^{\ds{{\int_0^t 2w(\tau,x)d\tau}}}}{1+2h(x)\ds{\int_0^t e^{\ds{\int_0^\tau 2w(s,x)ds}d\tau}}}.
\ee

We now observe that $w(t,x)=u(\phi(t,x))$. Therefore,
$$
\ba{lcl}
\inf\limits_{x\in\mathbb{K}}w(t,x)&=&\inf\limits_{x\in\mathbb{K}}(u\circ\phi)(t,x)\geq-\|u(t,\cdot)\|_{L^\infty(\mathbb{K})}=-\|G\ast(u-u_x)(t,\cdot)\|_{L^\infty(\mathbb{K})}\\
\\
&=&-\|G\ast(u_0-u_0')(t,\cdot)\|_{L^\infty(\mathbb{K})}\geq-\|G\|_{L^\infty(\mathbb{K})}\|u_0-u_0'\|_{L^1(\mathbb{K})}.
\ea
$$

We now observe that the conditions on Theorem \ref{teo1.2}'s statement say that
$$
\sigma_0-\|G\|_{L^\infty(\mathbb{K})}\|u_0-u_0'\|_{L^1(\mathbb{K})}\in(0,1/2)
$$
and if we replace $w(\cdot,\cdot)$ by $w(\cdot,\cdot)/2$, then \eqref{4.10} is nothing but the function $f$ given in Proposition \ref{prop4.2}. The result follows from that proposition replacing $L$ by $2\|G\|_{L^\infty(\mathbb{K})}\|u_0-u_0'\|_{L^1(\mathbb{K})}$. \hfill$\square$

\section{Proof of the geometric results}\label{sec5}

In this section we prove theorems \ref{teo1.3} and \ref{1.4}. We begin with the following observation: as long as $u_0\in H^4(\mathbb{K})$, Remark \ref{rem1.2} says that we then have a unique local solution $u\in C^0([0,T), H^s(\mathbb{K}))\cap C^1([0,T), H^{s-1}(\mathbb{K}))$. As a result, the forms \eqref{1.6} are defined on
\bb\label{5.1}
U_T=(0,T)\times\R.
\ee

Moreover, the proof that \eqref{1.6} defines a PSS follows from \cite[Theorem 1]{freire-tito-sam} choosing $m_1=1$ in \cite[Equation (8)]{freire-tito-sam} and for this reason it is omitted.

\subsection{Proof of Theorem \ref{teo1.3}}

As long as $u_0-u_0''>0$, by Theorem \ref{teo1.1} $u$ is global, meaning that it can be defined on $U=[0,\infty)\times\R$ and additionally satisfies $u(t,x+1)=u(t,x)$ for the periodic case. As a result, the one forms \eqref{1.6} are defined on $U_\infty=(0,\infty)\times\R$.

The conditions on the initial datum imply that $u>0$ and it cannot be constant on $U_\infty$, meaning that for open sets $\Omega\subseteq U$, we have $\nabla u(p)\neq0$, $p\in\Omega$. Without loss of generality, we may assume that $\Omega$ is a connected component of the set $\{p\in U,\,\,\nabla u(p)\neq0\}$. By \cite[Remark 1]{freire-tito-sam}, $\Omega$ is ensured with a PSS structure in the sense discussed in \cite[Section II]{reyes2000jmp}

\subsection{Proof of Theorem \ref{teo1.4}}

It suffices proving that under the conditions on the initial datum, then
\bb\label{5.2}
g_{11}=(u-u_{xx})^2+\Big(\mu (u-u_{xx})\pm \sqrt{1+\mu^2}\Big)^2\geq (u-u_{xx})^2
\ee
and
\bb\label{5.3}
g_{22}=(1+\mu^2)\Big(2u(u-u_{xx})-2(u-u_x)^2\Big)^2
\ee
cannot be simultaneously bounded.

Let $u_0$ and $x_0$ as in Theorem \ref{teo1.2}, $(t_n,x_n)$ be a sequence converging to $(T_0,x_0)$, such that $t_n\in(0,T_0)$. As such, $(t_nx_n)\in U_{T_0}$ (see \eqref{5.1}). Define $g_1=g_{11}\circ\phi$ and $g_2=g_{22}\circ\phi$, where $\phi$ is given by Proposition \ref{prop4.1}. The proof of Theorem \ref{teo1.2} tells us that 
$$(u-u_x)\circ\phi(t_n,x_n)\rightarrow-\infty\quad\text{as}\quad n\rightarrow\infty.$$ 
Therefore, if \eqref{5.3} is bounded, then 
$$u(u-u_{xx})(t_n,x_n)=O(((u-u_x)\circ\phi)(t_n,x_n)^2)\quad\text{as}\quad n\rightarrow\infty,$$ 
meaning that \eqref{5.2} cannot be bounded.

We now suppose that \eqref{5.2} is bounded. Therefore, for $n\gg 1$, we have
$$g_2(t_n,x_n)\approx 4(1+\mu^2)(u-u_x)(t_n,x_n)^2,$$
that cannot be bounded.

Finally, we now note that if either $g_{11}$ or $g_{22}$ is not bounded, then neither is $g_{12}$.

\section*{Acknowledgements} N. D. Mutluba\c{s} is supported by the Turkish Academy of Sciences within the framework of the Outstanding Young Scientists Awards Program (T\"{U}BA-GEBIP-2022) and Tubitak 1001 project (grant number T.A.CF-24-02925). I. L. Freire is thankful to CNPQ (grant number 310074/2021-5) for financial support. The authors would like to thank FAPESP (grant number 2024/01437-8) for financial support.

{\bf Declarations of interest:} none.

{\bf Declarations of AI:} AI has not been used in this work.

\appendix

\section{The Green function of the operator $1-\p_x$}\label{appendix}

Let us derive the Green function \eqref{1.5}. We begin with the periodic case.

The solution of the equation $y'(x)=y(x)$ is $y(x)=Ce^{x}$, for some $C\in\R$. If we impose that $y$ is periodic, with period $1$, we can measure its jump $[y_0]$ at $x=0$. In fact, since $y$ is 1-periodic, then $y(x)=y(x+1)$, for $x\in(-1,0)$. Then,
$$
[y_0]=\lim_{\epsilon \ssearrow 0}(y(\epsilon)-y(-\epsilon)=\lim_{\epsilon \ssearrow 0}(y(\epsilon)-y(1-\epsilon))=\lim_{\epsilon \ssearrow 0}(Ce^\epsilon-Ce^{1-\epsilon})=C(1-e).
$$

The Green function $G$ can be found by imposing that $G$ is a distribution that agrees with $y$ but has a jump in its first derivative at $x=0$, that is, $D_xG=y'+[y_0]\delta(x)$, where $'$ denotes classical derivative. Then
$$
(1-D_x)G=y-(y'+[y_0]\delta(x))=-[y_0]\delta(x).
$$

On the other hand, the Green function should satisfy $(1-D_x)G=\delta(x)$. Comparing this with the last equation, we conclude that $[y_0]=-1$, that is,
$$
C=\f{1}{e-1}.
$$

As a result, for $x\in(0,1)$ we have
$$G(x)=\f{e^x}{e-1}.$$

We can extend it periodically to the whole line by taking
$$
G(x)=\f{e^{x-\lfloor x \rfloor}}{e-1},\quad x\in\R.
$$

We now observe that if $v=(1-\p_x)u$, we can formally find its periodic Fourier transform $\hat{v}(k)=(1-2\pi i k)\hat{u}(k)$ and
$$
\|v\|_{H^s(\s)}=\sum_{k\in\mathbb{Z}}(1+4\pi^2k^2)^s|\hat{v}(k)|^2=\sum_{k\in\mathbb{Z}}(1+4\pi^2k^2)^s|1-2\pi i k|^2|\hat{u}(k)|^2=\sum_{k\in\mathbb{Z}}(1+4\pi^2k^2)^{s+1}|\hat{u}(k)|^2=\|u\|^2_{H^{s+1}(\s)},
$$
meaning that for the spaces we are working on this paper, $(1-\p_x)$ is an isomorphism and the solutions of \eqref{1.1}, \eqref{1.2} and \eqref{4.1} are the same.

For the non-periodic case, the Green function can be found by taking the Fourier transform of the equation
$$
(1-\p_x)G=\delta,
$$
and then use the inverse Fourier transform to get \eqref{1.5}.

\end{document}